\newtheoremstyle{plain}         
  {1\baselineskip}              
  {1\baselineskip}              
  {\slshape}                    
  {}                            
  {\bfseries}                   
  {.}                           
  {.5em}                        
  {\thmnumber{#2 }\thmname{#1}\thmnote{ \normalfont\textsc{(#3)}}} 
\theoremstyle{plain}
\newtheorem{thm}{Theorem}[section]
\newtheorem{prop}[thm]{Proposition}
\newtheorem{cor}[thm]{Corollary}
\newtheorem{lem}[thm]{Lemma}
\newtheorem{dfn}[thm]{Definition}
\newtheoremstyle{special}       
  {1\baselineskip}              
  {1\baselineskip}              
  {\slshape}                    
  {}                            
  {\bfseries}                   
  {.}                           
  {.5em}                        
  {\thmnumber{#2 }\thmnote{#3}} 
\theoremstyle{special}
\newtheoremstyle{note}          
  {1\baselineskip}              
  {1\baselineskip}              
  {\normalfont}                 
  {}                            
  {\bfseries}                   
  {:}                           
  {.5em}                        
  {\thmnumber{#2 }\thmname{#1}\thmnote{#3}}    
\theoremstyle{note}
\newtheorem*{bem}{Remark}
\newtheorem{bem*}[thm]{Bemerkung}
\newtheorem*{bsp}{Example}
\def\Z{\mathbb{Z}}  
\def\1{\mathds{1}}
\newcommand{\A}{\mathcal{A}}
\newcommand{\NN}{\mathcal{N}}
\def\Eins{\mathbb{1}}
\def\Eins{\mathds{1}}
\def\A{\mathcal{A}}
\newcommand{\bbG}{{\mathbb G}}
\newcommand{\bbZ}{{\mathbb Z}}
\newcommand{\calC}{{\mathcal C}}
\newcommand{\calN}{{\mathcal N}}
\newcommand{\calR}{{\mathcal R}}
\newcommand{\g}{\mathfrak{g}}
\begin{document}

\sloppy

\thispagestyle{empty}
\title[Semisimplified representation categories]{On supergroups and their semisimplified representation categories}
\author{Thorsten Heidersdorf}
\address{T.H.: Max-Planck Institut f\"ur Mathematik, Bonn}
\email{heidersdorf.thorsten@gmail.com}



\begin{abstract} The representation category $\mathcal{A} = Rep(G,\epsilon)$ of a supergroup scheme $G$ has a largest proper tensor ideal, the ideal $\calN$ of negligible morphisms. If we divide $\mathcal{A}$ by $\calN$ we get the semisimple representation category of a pro-reductive supergroup scheme $G^{red}$. We list some of its properties and determine $G^{red}$ in the case $GL(m|1)$.
\end{abstract}

\thanks{2010 {\it Mathematics Subject Classification}: 17B10, 18D10}

\maketitle

\smallskip
\noindent \textbf{Keywords.} super tannakian category; semisimple category; negligible morphisms; general linear supergroup; supergroup; tensor products

\thispagestyle{empty}

\setcounter{secnumdepth}{3} 

\section{Introduction} 

A fundamental fact about finite-dimensional algebraic representations of a reductive group over an algebraically closed field $k$ of characteristic 0 is complete reducibility: Every representation decomposes into a direct sum of irreducible representations. This is no longer true if we consider representations of supergroups on super vector spaces. Indeed by a classical result of Djokovic-Hochschild \cite{Djokovic-Hochschild} the representation category of a Lie superalgebra $\g$ is semisimple if and only if $\g$ is a semisimple Lie algebra or of the form $\mathfrak{osp}(1|2n)$ for $n \geq 1$. Correspondingly many standard techniques from Lie theory do not work for representations of supergroups. Although a lot of progress has been made on representations of special supergroups such as $GL(m|n)$ and $OSp(m|2n)$, many classical questions are still open, most notably the tensor product decompositon of two irreducible representations. The category of finite-dimensional super representations $Rep(G)$ (or its full subcategory $Rep(G,\epsilon)$), $G$ a supergroup, is a tensor category. Every $k$-linear tensor category has a largest proper tensor ideal $\NN$, the tensor ideal of negligible morphisms. By \cite{Andre-Kahn} the quotient category $\omega: Rep(G,\epsilon) \to Rep(G,\epsilon)/\NN$ is an abelian semisimple $k$-linear tensor category. This quotient is called the semisimplification of $Rep(G,\epsilon)$. 

\medskip
Semisimplifications of other tensor categories have been studied in a variety of cases: A well-known example is the quotient of the category of tilting modules by the negligible modules (of quantum dimension 0) in the representation category of the Lusztig quantum group $U_q(\mathfrak{g})$ where $\mathfrak{g}$ is a semisimple Lie algebra over $k$ \cite{Andersen-Paradowski} \cite{Bakalov-Kirillov}. In \cite{Jannsen} Jannsen proved that the category of numerical motives as defined via algebraic correspondences modulo numerical equivalence is an abelian semisimple category. It was noted by Andr\'e and Kahn \cite{Andre-Kahn} that taking numerical equivalence amounts to taking the quotient by the negligible morphisms. A generalization of these results was obtained in \cite{Marcolli-Tabuada}. Recently Etingof and Ostrik \cite{Etingof-Ostrik} studied semisimplifactions with an emphasis on finite tensor categories. 

\begin{thm} (Theorem \ref{prop:fundamental}) The quotient $Rep(G,\epsilon)/\NN$ is a super-tannakian category, i.e. it is of the form $Rep(G^{red},\epsilon')$ where $G^{red}$ is a supergroup scheme with semisimple representation category and $\epsilon': \mu_2 \to G$ such that the operation of $\mu_2$ gives the $\Z_2$-graduation of the representations.
\end{thm}

This result follows immediately from a characterization of representation categories due to Deligne \cite{Deligne-tensorielles}. A natural question is to understand and possibly determine $G^{red}$ for given $G$. This is very difficult and not even possible in the general case. We assemble a few general results about these quotients and then focus on the $GL(m|n)$-case ($m \geq n$).

\medskip

We show that the classification of the irreducible representations of $G^{red}$ is a wild problem for $n \geq 3$ in theorem \ref{thm:wildness}. Hence the question should be modified as follows: We should study the subcategory in $Rep(G^{red},\epsilon')$ generated by the images $\omega(L(\lambda))$ of the irreducible representations of $G$. To determine this subcategory would amount to determine the tensor product decomposition of irreducible representations up to superdimension 0 and would give a parametrization of the indecomposable summands of non-vanishing superdimension. We study this problem in \cite{Heidersdorf-Weissauer-tannaka} in the case of $GL(n|n)$. The cases $GL(m|1)$ and $SL(m|1)$ are rather special since the blocks are of tame representation type and the indecomposable representations have been classified \cite{Germoni-sl} and we can hope to determine the entire quotient category. From the classification it is easy to determine the irreducible objects of  $Rep(G^{red},\epsilon')$ in lemma \ref{thm:irreducible}. We then compute their tensor product decomposition in theorem \ref{thm:tensor-product}. 

\begin{thm} (Theorem \ref{thm:main}) The quotient $Rep(GL(m|1))/\mathcal{N}$ is equivalent to the super representations of $GL(m-1) \times GL(1) \times GL(1)$ and therefore \[ GL(m|1)^{red} \cong \Z/2\Z  \ltimes ( \ GL(m-1) \times GL(1) \times GL(1) \ )  \] where $\Z/2\Z$ acts by the parity automorphism on $GL(m-1) \times GL(1) \times GL(1)$. 
\end{thm} 

In this statement we adopt the convention - as in the rest of the article - that any group scheme $G$ is seen as a supergroup scheme with trivial odd part and $Rep(G)$ is the category of super representations. In order to determine the tensor product decomposition we use two tools: The theory of mixed tensors \cite{Heidersdorf-mixed-tensors} gives us the tensor product decomposition between the irreducible $GL(m|1)$-representations. We then use cohomological tensor functors $DS: Rep(GL(m|1)) \to Rep(GL(m-1))$ akin to those of \cite{Duflo-Serganova} \cite{Heidersdorf-Weissauer-tensor} to reduce the tensor product decomposition between indecomposables to the irreducible case. The main point here is that $DS(V)$ is a $\Z$-graded object for any $V$, hence $DS$ could be interpreted as a functor to $\Z \times Rep(GL(m-1))$.

\medskip
To determine $G^{red}$ is probably in reach for the simple supergroups of maximal atypicality 1. To determine the subgroup of $G^{red}$ corresponding to the irreducible representations is already very difficult for $GL(m|n)$ and even more so for the supergroups $OSp(m|2n)$ and $P(n)$.

\subsection*{Acknowledgements} I would like to thank the referee for providing many useful comments on an earlier version of this paper.

\medskip

\section{Preliminaries}\label{sec:preliminaries}

\textit{Super linear algebra}. Throughout the article $k$ is an algebraically closed field of characteristic 0. A super vector space is a finite-dimensional $\Z_2$-graded vector space $V = V_0 \oplus V_1$ over $k$. Elements in $V_0$ respectively $V_1$ are called even respectively odd.  An element is homogenuous if it is either even or odd. For a homogeneous element $v$ write $p(v)$ for the parity defined by \begin{align*} p(v) = \begin{cases} 0 & v \in V_0 \\ 1 &  v \in V_1. \end{cases}\end{align*}  We denote by $Hom(V,W)$ the set of $k$-linear parity-preserving morphism between two super vector spaces $V$ and $W$. The parity shift functor $\Pi:svec \to svec$ is defined by $(\Pi V)_0 = V_1$, $(\Pi V)_1 = V_0$ and on morphisms $f:V \to W$ via $\Pi f: v \mapsto f(v)$ where $v$ is viewed as an element of $\Pi V$ and $f(v)$ as an element of $\Pi W$. 

\medskip
We refer the reader to \cite{Serganova-quasireductive} \cite{Westra} for some background on affine supergroup schemes. Recall that a functor from the category of commutative superalgebras $G:salg \to sets$ is called a supergroup scheme $G$ if $G$ is a representable group functor, i.e. there is some commutative Hopf superalgebra $R$ such that $G \cong Spec(R)$. It is called a supergroup if the representing superalgabra is finitely generated. A particular important example is the functor $G = GL(m|n)$ from the category of commutative superalgebras to the category of groups which sends $A = A_0 \oplus A_1$ to the invertible $(m+n) \times (m+n)$ matrices of the form \[ \begin{pmatrix} a & b \\ c & d \end{pmatrix} \] where $a$ is an $(m \times m)$-matrix with entries in $A_0$,  $b$ is an $(m \times n)$-matrix with entries in $A_1$, $c$ is an $(n \times m)$-matrix with entries in $A_1$ and $d$ is an $(n \times n)$-matrix with entries in $A_0$.

\medskip
{\it Representations}. We denote the category of finite-dimensional representations of $G$ on super vector spaces with parity preserving morphisms by $Rep(G)$. Let $G$ be a supergroup scheme and let $\epsilon$ be an element of $G(k)$ of order dividing 2 such that the automorphism $int(\epsilon)$ of $G$ is the parity automorphism defined by $x \mapsto (-1)^{p(x)}x$ for homogeneous $x$. Then let $Rep(G,\epsilon)$ be the category of (finite-dimensional) representations $V = (V,\rho)$ such that $\rho(\epsilon)$ is the parity automorphism of $V$. 

\begin{bsp} \cite[Example 0.4 (i)]{Deligne-tensorielles} If $G$ is an affine group scheme, then $\epsilon$ is central. If $\epsilon$ is trivial, one recovers the ordinary representation category of $G$ (non-super). If $\epsilon$ is non-trivial, for every representation $(V,\rho)$ of $G$ the involution $\rho(\epsilon)$ defines a $\Z_2$-graduation on $V$ and the commutativity isomorphism of the tensor product is given by the Koszul rule.  The category $Rep(G,\epsilon)$ identifies itself with $Rep(G,1)$  as a $k$-linear monoidal category, but they are not equivalent as symmetric monoidal categories.
\end{bsp}

\begin{bsp} \cite[Example 0.4 (ii)]{Deligne-tensorielles} If $G$ is an affine supergroup scheme, let $\mu_2$ act on $G$ by the parity automorphism. Then $Rep( \mu_2 \ltimes G, \epsilon = (-1,e))$ is the category of super representations of $G$. 
\end{bsp}

For the supergroup $GL(m|n)$ and $\epsilon = diag(E_m,-E_n)$ we put $Rep(GL(m|n),\epsilon) = \calR_{m|n}$. For the whole category $Rep(GL(m|n))$ we also write $\mathcal{T}_{m|n}$. Then $\mathcal{T}_{m|n} = \calR_{m|n} \oplus \Pi \calR_{m|n}$ \cite[Corollary 4.44]{Brundan-Kazhdan}.

\medskip
The categories $\mathcal{T}_{m|n}$ and $\calR_{m|n}$ are examples of super tannakian categories. For background on tensor categories we refer to \cite{Deligne-Milne}. We denote the unit object of a tensor category by $\Eins$.

\begin{dfn} A  $k$-linear, abelian, rigid tensor category $\A$ with $k \simeq End(\Eins)$ and with a $k$-linear exact faithful tensor functor $\rho: \A \to svec$ (a super fibre functor) is called a super tannakian category.
\end{dfn}

\begin{thm}\cite[8.19]{Deligne-Festschrift} Every super tannakian category $\A$ is tensor equivalent to the category  $\A \simeq Rep(G,\epsilon)$ of representations of a supergroup scheme $G$. 
\end{thm}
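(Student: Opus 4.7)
The plan is to run a super-analogue of Tannakian reconstruction. Given the super-tannakian category $\A$ with super fibre functor $\rho: \A \to svec$, I would build $G$ directly as the functor of tensor automorphisms of $\rho$ and then check that the tautological comparison functor is an equivalence.

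Concretely, first I would define $G: salg \to groups$ by setting $G(A)$ to be the group of natural $\otimes$-automorphisms of the composite $\rho_A: \A \xrightarrow{\rho} svec \to A\text{-smod}$, $X \mapsto \rho(X) \otimes_k A$, where I insist the automorphisms be even (parity preserving) on each $\rho_A(X)$. Using the coend description of the ordinary endomorphism coalgebra $L = \int^X \rho(X)^\vee \otimes \rho(X)$, adapted to the super setting with Koszul signs, one shows that $G$ is representable by $\Spec$ of a commutative Hopf superalgebra; rigidity of $\A$ promotes the resulting bialgebra structure to a Hopf structure and produces antipodes. Since each $\rho(X)$ carries a canonical $\Z_2$-grading, the parity involution $v \mapsto (-1)^{p(v)}v$ defines a distinguished element $\epsilon \in G(k)$ of order dividing $2$ whose inner action on $G$ is the parity automorphism.

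Next I would construct the comparison functor $F: \A \to \Rep(G,\epsilon)$ sending $X$ to $\rho(X)$ together with the tautological $G$-action; by construction $\rho(\epsilon)$ is the parity automorphism of $\rho(X)$, so $F$ indeed lands in $\Rep(G,\epsilon)$, and $F$ is $k$-linear, exact, faithful and symmetric monoidal. Faithfulness and exactness are inherited from $\rho$. Fullness reduces, via rigidity, to showing that $\Hom_\A(\Eins, Y) \xrightarrow{\sim} \Hom_G(\Eins, F(Y))^G$ for all $Y$, which follows from the coend definition of $G$ together with $\End_\A(\Eins) = k$.

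For essential surjectivity, the key step, I would argue that every finite-dimensional $(G,\epsilon)$-representation $W$ is a subquotient of a representation of the form $F(X)$ for some $X \in \A$, using that the regular representation of $G$ is the filtered colimit of the subcoalgebras $F(X)^\vee \otimes F(X)$ for $X \in \A$; combined with fullness this forces $W$ itself to lie in the image of $F$. This last step is the main obstacle: one has to check that the super analogue of the standard argument (every finite-dimensional comodule is a subcomodule of a finite direct sum of coefficient coalgebras) goes through without obstruction from the Koszul sign rule, and that the condition $\rho(\epsilon) = $ parity automorphism suffices to match the category $\Rep(G,\epsilon)$ rather than some larger category of $G$-supermodules. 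Granting this, $F$ is a tensor equivalence $\A \simeq \Rep(G,\epsilon)$, which is the statement of the theorem.
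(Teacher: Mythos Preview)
The paper does not prove this statement at all: it is quoted verbatim as a theorem of Deligne from \cite{Deligne-tensorielles}, with no argument supplied. So there is no ``paper's own proof'' to compare against; the author is simply invoking the result as a black box.

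Your outline is the standard super-Tannakian reconstruction argument (build $G$ as $\underline{\mathrm{Aut}}^{\otimes}(\rho)$ via a coend, extract $\epsilon$ from the parity involution, and verify the comparison functor is an equivalence), and in broad strokes it is correct for the theorem as stated here, since the definition of ``super-tannakian'' in the paper already \emph{assumes} the existence of a super fibre functor $\rho:\A\to svec$. With that hypothesis in hand, the reconstruction is indeed the relatively formal part of Deligne's work; the deep content of \cite{Deligne-tensorielles} lies in the \emph{next} theorem the paper quotes (Schur-finiteness forces the existence of a super fibre functor), not in this one. One small point worth tightening in your sketch: your formula ``$\Hom_\A(\Eins,Y)\xrightarrow{\sim}\Hom_G(\Eins,F(Y))^G$'' has a stray superscript $G$ (the $\Hom_G$ already records $G$-invariance), and for essential surjectivity you should also note that $\A$ being abelian and $F$ exact and full lets you promote ``subquotient of some $F(X)$'' to ``isomorphic to some $F(X)$''. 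Otherwise the plan is sound.
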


For a partition $\lambda$ of $n$ let $S_{\lambda}(-)$ be the associated Schur functor \cite{Deligne-tensorielles}. We put $Sym^n(X) = S_{(n)}(X)$ (the $n$-th symmetric power) and $\Lambda^n(X) = S_{(1,\ldots,1)}(X)$ (the $n$-th alternating power). An object $X$ of $\A$ is called Schur-finite if there exists an integer $n$ and a partition $\lambda$ of $n$ such that $S_{\lambda}(X) = 0$. 

\begin{thm}\cite[Th\'eor\`eme 0.6]{Deligne-tensorielles} \label{deligne-super} If $\A$ is an abelian $k$-linear rigid tensor category with $End(\Eins) \simeq k$ such that every object is Schur finite, then $\A$ is a super tannakian category, i.e. $\A \simeq Rep(G,\epsilon)$ for some supergroup scheme  $G$.
\end{thm}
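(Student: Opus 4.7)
The plan is to reduce to the preceding super tannakian reconstruction theorem: it suffices to produce a $k$-linear exact faithful symmetric tensor functor $\rho: \A \to svec$. Using ind-completion and writing $\A$ as a filtered colimit of its finitely tensor-generated subcategories, one can moreover reduce to the case $\A = \langle X \rangle$ tensor-generated by a single Schur-finite object $X$, with $S_\lambda(X) = 0$ for some partition $\lambda \vdash n$. The supergroup scheme for the full $\A$ is then recovered as the inverse limit of those attached to each $\langle X \rangle$.

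First, I would extract numerical information from Schur-finiteness. By the character formula for Schur functors, the categorical trace of the identity on $S_\mu(X)$ is a polynomial in $d := \dim(X) \in k$, and the vanishing $S_\lambda(X) = 0$ forces $d$ to satisfy an explicit polynomial identity. A sharper analysis of the $k[S_n]$-action on $X^{\otimes n}$ via Schur--Weyl duality inside $\A$ then shows $\dim(X) \in \Z$ and, more generally, that the lengths of the $X^{\otimes n}$ grow at most exponentially in $n$.

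The main obstacle is the construction of an intrinsic $\Z/2$-grading. The crucial step is a functorial decomposition $X \simeq X_+ \oplus X_-$ such that $\Lambda^{N+1}(X_+) = 0$ and $Sym^{M+1}(X_-) = 0$ for some integers $M, N$ depending only on $\lambda$. The projectors onto $X_\pm$ must be built from carefully chosen idempotents in $\End(X^{\otimes n})$ arising from the $k[S_n]$-action, chosen so as to separate the ``bosonic'' from the ``fermionic'' Young symmetrizers associated to the obstruction partition $\lambda$; this is the technical heart of the argument and where integrality of $\dim(X)$ gets used crucially. Once the decomposition is in place, $\langle X_+ \rangle$ has bounded exterior powers on its tensor generator, hence is classically tannakian by Deligne's earlier criterion, producing a classical fibre functor $\rho_+$. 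Since $\Pi X_-$ has bounded exterior powers as well, $\langle X_- \rangle$ becomes tannakian after applying the parity shift $\Pi$, which yields a classical fibre functor that we interpret as an odd super fibre functor $\rho_-$. Assembling $\rho_+$ and $\rho_-$ along the constructed $\Z/2$-grading gives the desired super fibre functor $\rho: \A \to svec$, and the preceding theorem then delivers $\A \simeq Rep(G, \epsilon)$ for a supergroup scheme $G$.
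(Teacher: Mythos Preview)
The paper does not prove this statement; it is cited from Deligne without argument, so there is no proof here to compare against.

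On its own merits, your sketch has the right overall shape (reduce to one generator, extract integrality and subexponential growth, build a super fibre functor) but contains a genuine gap at the decisive step. You propose to obtain the splitting $X\simeq X_+\oplus X_-$ from ``idempotents in $\End(X^{\otimes n})$ arising from the $k[S_n]$-action,'' but Young symmetrizers give idempotents that decompose $X^{\otimes n}$ into the Schur pieces $S_\mu(X)$, not idempotents in $\End(X)$; there is no mechanism in what you wrote for pushing them down to projectors on $X$ itself. A posteriori the splitting exists (it is the $\epsilon$-eigenspace decomposition once one knows $\A\simeq Rep(G,\epsilon)$), but producing it a priori is exactly the content of the theorem. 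What the $S_n$-analysis genuinely delivers, in Deligne's argument, is that every \emph{simple} object $L$ is either even (some $\Lambda^N L=0$) or odd (some $Sym^M L=0$) and that lengths of tensor powers grow subexponentially; the passage from this to a super fibre functor is considerably more delicate and does not proceed by first splitting a chosen generator inside $\A$.

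There is a second gap in your final step. Even granting the splitting, ``assembling $\rho_+$ and $\rho_-$'' into a tensor functor on all of $\langle X\rangle$ is not automatic: $\langle X\rangle$ contains subquotients of mixed powers $X_+^{\otimes a}\otimes X_-^{\otimes b}$, and one must show the two partial fibre functors agree compatibly on all of these, which is essentially as hard as the original problem. Your invocation of the parity shift $\Pi$ is also premature, since $\A$ is not assumed to carry such an endofunctor before the theorem is established.
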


\begin{lem} \label{germoni-nice} \cite[Lemma 1.1.1]{Germoni-sl} Let $\A$ be a small abelian k-linear category such that morphism spaces are finite-dimensional, every object has a  finite composition series and the category has enough projectives. Then
\begin{enumerate}
	\item The endomorphism ring of any indecomposable object is a local ring.
	\item Every object can be written as a direct sum of indecomposable objects.

	\item Every module has a unique projective cover

\end{enumerate}
\end{lem}

An example is given by the category $\calR_{m|n}$ \cite{Germoni-sl}\cite{Serganova-quasireductive}.



\section{The universal semisimple quotient}\label{sec:semisimple}

\subsection{Negligible morphisms}

An additive category $\A$ is a Krull-Schmidt category if every object has a decomposition in a finite direct sum of elements with local endomorphism rings. An ideal in a $k$-linear category is for any two objects $X,Y$ the specification of a $k$-submodule $\mathcal{T}(X,Y)$ of $Hom_{\A}(X,Y)$, such that for all pairs of morphisms $f \in Hom_{\A}(X,X'), \ g \in Hom_{\A}(Y,Y')$ the inclusion $g \mathcal{T}(X',Y)f \ \subseteq \mathcal{T}(X,Y')$ holds. Let $\mathcal{T}$ be an ideal in $\A$. Then $\A/{\mathcal{T}}$ is the category with the same objects as $\A$ and with $Hom_{\A/\mathcal{T}} (X,Y ) = Hom_{\A}(X,Y)/\mathcal{T}(X,Y)$. It is again a Krull-Schmidt category \cite{Liu}, \cite[Lemma 2.1]{Koenig-Zhu}. Suppose that $\A$ is abelian and that every object has finite length and let $X$ be an indecomposable element and $\phi$ an endomorphism of $X$. By Fitting's lemma $\phi$ is either invertible or nilpotent. An element $X$ is indecomposable if and only if its endomorphism ring is a local ring. We assume in the following that $\A$ is a super tannakian category or a pseudoabelian full tensor subcategory. Then all the above conditions hold. 

\medskip
An ideal in a tensor category is a tensor ideal if it is stable under $id_C \otimes -$ and $- \otimes id_C$ for all $C \in \A$. The ideal is then stable under tensor products from left or right with arbitrary morphisms. Let $Tr$ be the trace. For any two objects $A, B$ we define $\mathcal{N}(A,B) \subset Hom(A,B)$ by \[ \mathcal{N}(A,B) = \{ f \in Hom(A,B) \ | \ \text{ for all } g \in Hom(B,A), \ Tr(g \circ f ) = 0 \}. \] The collection of all $\mathcal{N}(A,B)$ defines a tensor ideal $\mathcal{N}$ of $\mathcal{A}$ \cite{Andre-Kahn}, the tensor ideal of negligible morphisms. By \cite[Th\'eor\`eme 8.2.2a]{Andre-Kahn} we have the following theorem.

\begin{thm} (i) $\mathcal{N}$ is the largest proper tensor ideal of $\A$.\\
(ii) The only proper tensor ideal $\mathcal{I}$ of $\A$ such that the quotient  $\A/\mathcal{I}$ is semisimple, is $\mathcal{I} = \mathcal{N}$. 
\end{thm}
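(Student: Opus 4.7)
The plan is to establish (i) first via the categorical trace, and then use it --- together with an analysis of endomorphism rings of indecomposables --- to derive (ii); the uniqueness is the subtlest part.

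For (i), given any proper tensor ideal $\mathcal{I}$ and $f \in \mathcal{I}(A,B)$, for every $g : B \to A$ the composite $g \circ f$ lies in $\mathcal{I}(A,A)$. Its categorical trace $Tr(g \circ f) \in End(\Eins) = k$ is constructed from $g \circ f$ by tensoring with $\mathit{id}_{A^\vee}$ and composing with $\text{coev}_A$, $\text{ev}_A$ --- operations that preserve $\mathcal{I}$ since it is a tensor ideal. Hence $Tr(g \circ f) \in \mathcal{I}(\Eins,\Eins) \subsetneq End(\Eins) = k$, and since $k$ is a field this trace must vanish, forcing $f \in \mathcal{N}$. Conversely $\mathcal{N}$ is itself proper because $Tr(\mathit{id}_\Eins) = 1 \ne 0$.

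For (ii), I would first show $\A/\mathcal{N}$ is semisimple. Using Krull-Schmidt, it suffices to examine an arbitrary indecomposable $X$: its endomorphism ring is local and finite-dimensional over $k$, every element of $\text{rad}(End(X))$ is nilpotent by Fitting's lemma, and applying the super fiber functor $\rho : \A \to svec$ shows nilpotents have vanishing supertrace. Hence for $f \in \text{rad}(End(X))$ and any $g \in End(X)$, the product $gf$ is nilpotent and $Tr(gf) = 0$, so $\text{rad}(End(X)) \subseteq \mathcal{N}(X,X)$. Since $Tr$ vanishes on the radical it factors through $End(X)/\text{rad} \cong k$, so $\mathcal{N}(X,X)$ equals all of $End(X)$ exactly when $Tr(\mathit{id}_X) = 0$, and equals the maximal ideal $\text{rad}(End(X))$ otherwise. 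Either way, each indecomposable becomes zero (when its superdimension vanishes) or simple with endomorphism ring $k$ (otherwise) in $\A/\mathcal{N}$, yielding semisimplicity.

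The main obstacle is the uniqueness. Suppose $\mathcal{I}$ is a proper tensor ideal with $\A/\mathcal{I}$ semisimple. Part (i) gives $\mathcal{I} \subseteq \mathcal{N}$. If some $f \in \mathcal{N}(A,B) \setminus \mathcal{I}(A,B)$ existed, then $\overline{f}$ would be nonzero in the semisimple $\A/\mathcal{I}$, with a nonzero block $S \to S$ on a simple summand $S$; choosing $g$ with nonzero matrix trace yields $Tr(g \circ f) = \lambda \cdot \text{sdim}(S)$ with $\lambda \ne 0$, contradicting $f \in \mathcal{N}$ provided $\text{sdim}(S) \ne 0$. The delicate step is therefore ruling out simple objects of zero superdimension in $\A/\mathcal{I}$: for such an $S$ the coevaluation $\text{coev}_S : \Eins \to S \otimes S^\vee$ remains nonzero (the triangle identity forces this since $S \ne 0$), so by Schur its image is the simple object $\Eins$ appearing as a direct summand with some splitting $\pi$, giving $\pi \circ \text{coev}_S$ nonzero in $End(\Eins) = k$; but $\pi$ corresponds via duality to some $\phi \in End(S)$, and the identity $\pi \circ \text{coev}_S = Tr(\phi) = \lambda \cdot \text{sdim}(S) = 0$ provides the contradiction. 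Hence $\mathcal{I} = \mathcal{N}$.
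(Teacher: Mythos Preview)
The paper does not actually prove this theorem: it simply cites \cite{Andre-Kahn}, 8.2.2a and states the result. Your proposal therefore goes beyond the paper by supplying a self-contained argument, and the argument is essentially correct.

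For part (i) your reasoning is standard and sound; the only point left implicit is why $\mathcal{I}(\Eins,\Eins)\subsetneq k$ for a proper tensor ideal (because $id_\Eins\in\mathcal{I}$ would force $id_X = id_X\otimes id_\Eins\in\mathcal{I}$ for all $X$).

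For the semisimplicity of $\A/\mathcal{N}$ you correctly compute $\mathcal{N}(X,X)$ for indecomposable $X$ via Fitting, but you omit the companion statement $\mathcal{N}(X,Y)=Hom(X,Y)$ for \emph{non-isomorphic} indecomposables $X,Y$, which is what guarantees that the surviving indecomposables are genuinely simple in the quotient (not merely that they have local endomorphism ring). The same mechanism applies: for $f:X\to Y$ and $g:Y\to X$ the composite $gf$ cannot be invertible, hence lies in the radical, hence has vanishing trace. The paper in fact records exactly this computation in a separate lemma just after the theorem.

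Your uniqueness argument for (ii) is correct. The ``delicate step'' --- that in a semisimple rigid $k$-linear tensor category with $End(\Eins)=k$ no nonzero simple can have dimension zero --- is a well-known lemma, and your sketch via $coev_S\neq 0$ together with the one-dimensionality of $Hom(S\otimes S^\vee,\Eins)\cong End(S)$ is the standard proof. Two small points you use implicitly and might make explicit: the quotient functor $\A\to\A/\mathcal{I}$ is a tensor functor, so it preserves traces (hence $Tr_\A(gf)=Tr_{\A/\mathcal{I}}(\bar g\bar f)$), and it is full by construction, so the chosen $\bar g$ in the quotient lifts to a morphism $g$ in $\A$.
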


The quotient $\A/\NN$ will be called the universal semisimple quotient of $\A$. 

\begin{thm} \label{prop:fundamental} The quotient $\A/\mathcal{N}$ is again a super tannakian category. If $\A' \subset \A$ is a pseudoabelian full tensor subcategory, the quotient $\A'/(\NN \cap \A')$ is a super tannakian category.
\end{thm}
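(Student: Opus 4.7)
The plan is to verify Deligne's Schur-finiteness criterion (the second cited Deligne theorem) for $\A/\NN$. By the already-cited theorem of André--Kahn, $\A/\NN$ is $k$-linear, semisimple abelian, and carries a rigid tensor structure. So only two things remain: $End(\Eins)\simeq k$, and Schur-finiteness of every object.

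First I would dispose of $End(\Eins)$. By the quotient description, $End_{\A/\NN}(\Eins)$ is a quotient of $End_{\A}(\Eins)=k$, and it is nonzero because $Tr(\id_{\Eins})=\dim(\Eins)=1$ forces $\id_{\Eins}$ to be non-negligible. Hence $End_{\A/\NN}(\Eins)=k$.

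Next, Schur-finiteness. The quotient functor $\omega\colon\A\to\A/\NN$ is a $k$-linear tensor functor, so it commutes with the Schur functor constructions; for any object $X\in\A$ and partition $\lambda$ one has $\omega(S_{\lambda}(X))=S_{\lambda}(\omega(X))$. Since $\A$ is super-tannakian by hypothesis, Deligne's structure theorem together with the classical vanishing of Schur functors on super vector spaces implies that every $X\in\A$ is Schur-finite. Applying $\omega$ one gets Schur-finiteness of every object of $\A/\NN$. Deligne's criterion then yields $\A/\NN\simeq Rep(G^{red},\epsilon)$ for some supergroup scheme $G^{red}$.

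For the second statement I would run the same argument on $\A'$: the André--Kahn semisimplification applies to any pseudoabelian rigid $k$-linear tensor category, so $\A'/(\NN\cap\A')$ is $k$-linear, semisimple abelian, and rigid. The inclusion $\A'\hookrightarrow\A$ induces a fully faithful tensor functor $\A'/(\NN\cap\A')\hookrightarrow\A/\NN$, since by definition $\NN\cap\A'$ is exactly the ideal of morphisms in $\A'$ that are negligible in $\A$, and this ideal coincides with the intrinsic negligible ideal of $\A'$ (non-negligibility in $\A$ is tested against morphisms in $\A'$ because $\A'$ is a full tensor subcategory containing the duals needed to form traces). This realizes $\A'/(\NN\cap\A')$ as a full tensor subcategory of the super-tannakian category $\A/\NN$; the conditions $End(\Eins)=k$ and Schur-finiteness are then inherited, and a final application of Deligne's criterion finishes the argument.

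The main obstacle is not conceptual but is a matter of bookkeeping: one must justify that the André--Kahn construction is actually available in the pseudoabelian (rather than abelian) setting, and that the intersected ideal $\NN\cap\A'$ coincides with the intrinsic negligible ideal of $\A'$. Both points are standard consequences of the rigidity and full-subcategory hypotheses, but they are what make the second half of the statement work.
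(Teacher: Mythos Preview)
Your argument is correct and follows essentially the same route as the paper: establish that the quotient is an abelian semisimple rigid $k$-linear tensor category (via Andr\'e--Kahn) and then invoke Deligne's Schur-finiteness criterion, using that the quotient functor $\omega$ commutes with Schur functors. The paper's proof is slightly terser---it omits your explicit check of $End(\Eins)=k$ and your separate treatment of the $\A'$ case, but does spell out the idempotent-lifting step showing the quotient is pseudoabelian and hence abelian---so the differences are presentational, not substantive.
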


\begin{proof} The quotient of a $k$-linear rigid tensor category by a tensor ideal is again a $k$-linear rigid tensor category. Since $\NN$ is a tensor ideal the quotient functor $\omega: \A \to \A/\NN$ is a tensor functor. The quotient category is semisimple by construction. Since $Hom$-spaces are finite-dimensional one has idempotent lifting, hence $\A/\NN$ is pseudoabelian. A $k$-linear semisimple pseudoabelian category is abelian by \cite[Proposition 2.1.2]{Andre-Kahn}. By \cite[Th\'eor\`eme 0.6]{Deligne-tensorielles} an abelian tensor category is super tannakian if and only if for every object $A$ there exists a Schur functor $S_{\mu}$ with $S_{\mu}(A) = 0$. Since $\omega(S_{\mu}(A)) = S_{\mu}(\omega(A))$ any object in $\A/\NN$ is also annulated by a Schur functor.  
\end{proof}

\begin{bem} Andr\'e and Kahn prove in the tannakian case a stronger statement which does not use that $k$ is algebraically closed. They show \cite[Th\'eor\`eme 13.2.1]{Andre-Kahn} that there exists a tensorial splitting \[ \xymatrix{ \A \ar[r]^{\omega} & \A/\NN \ar@/^1pc/[l]^{s}  }   \] If $\rho: \A \to vec$ is a fiber functor, the composition $\rho \circ s$ is a fiber functor for $\A/\NN$. In particular $\A/\NN$ is a neutral tannakian category if $\A$ is one, and $\A/\NN$ is the representation category of a group scheme. There cannot be such a splitting in the super tannakian case since $\omega(X)$ is zero for any indecomposable $X$ of superdimension 0 by lemma \ref{object-zero}. However the question remains if $\A/\NN$ is neutral for a neutral super tannakian category $\A$.
\end{bem}

The category $\A/\NN$ has the following universal property.

\begin{prop}\label{prop:factorization} Let $\omega: \A \to \mathcal{C}$ be a full tensor functor into a semisimple tensor category $\mathcal{C}$. Then $\omega$ factorises over the quotient $\A/\NN$.
\end{prop}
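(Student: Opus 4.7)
The content of the proposition is that $\omega$ sends every negligible morphism to zero. Setting $\mathcal{I} := \ker\omega = \{f \mid \omega(f)=0\}$, this collection is automatically a tensor ideal of $\A$ (from $\omega$ being a tensor functor), and $\omega$ factors canonically as $\omega = \bar\omega \circ q$, where $q : \A \to \A/\mathcal{I}$ is the projection and $\bar\omega : \A/\mathcal{I} \to \mathcal{C}$ is faithful by construction. The strategy is to identify $\mathcal{I}$ with $\NN$ via part (ii) of the Andre-Kahn theorem cited above, which singles out $\NN$ as the \emph{only} proper tensor ideal whose quotient is semisimple.

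Excluding the trivial case $\omega = 0$, the ideal $\mathcal{I}$ is proper. Fullness of $\omega$ immediately passes to $\bar\omega$, so $\bar\omega$ is fully faithful, and $\A/\mathcal{I}$ is equivalent to a full tensor subcategory $\mathcal{D} \subseteq \mathcal{C}$. Because $\A$ is pseudoabelian with finite-dimensional Hom-spaces, idempotent lifting shows $\A/\mathcal{I}$ (and hence $\mathcal{D}$) is pseudoabelian as well. In the ambient semisimple category $\mathcal{C}$ every object of $\mathcal{D}$ decomposes as a direct sum of simples of $\mathcal{C}$; the corresponding projections are idempotents of $\mathcal{D}$, and pseudoabelianness forces their images, i.e.\ the simple summands, to lie in $\mathcal{D}$. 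Thus $\mathcal{D}$ is semisimple, and by the observation already recorded in the paper (Andre-Kahn) a pseudoabelian semisimple $k$-linear category is automatically abelian.

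Applying part (ii) of the Andre-Kahn theorem to the proper tensor ideal $\mathcal{I}$ with abelian semisimple quotient $\A/\mathcal{I}$ then forces $\mathcal{I} = \NN$, which is exactly the required factorisation $\omega = \bar\omega \circ q$ through $\A/\NN$. The delicate step is the semisimplicity of $\A/\mathcal{I}$: this is where both hypotheses enter essentially, fullness of $\omega$ to present $\A/\mathcal{I}$ as an honest full subcategory of $\mathcal{C}$ rather than merely a faithful image, and pseudoabelianness (via idempotent lifting from $\A$) to guarantee closure under the direct-summand decompositions produced by semisimplicity of $\mathcal{C}$.
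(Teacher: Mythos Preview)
Your proof is correct but takes a different route from the paper's. The paper argues directly: tensor functors preserve traces, so $\omega$ sends negligible morphisms to negligible morphisms; in a semisimple tensor category the negligible ideal is zero; hence $\omega(\NN)=0$ and the factorisation follows at once. In particular the paper's argument does not use the fullness of $\omega$ at all --- that hypothesis is superfluous for the statement as written.

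Your approach, by contrast, leans on fullness in an essential way (to realise $\A/\ker\omega$ as a full pseudoabelian subcategory of $\mathcal{C}$, hence semisimple) and then invokes the Andr\'e--Kahn uniqueness theorem to force $\ker\omega=\NN$. This is more circuitous, but it yields a genuinely stronger conclusion: not merely $\NN\subseteq\ker\omega$ (factorisation), but equality $\ker\omega=\NN$, so that the induced functor $\A/\NN\to\mathcal{C}$ is fully faithful. The two arguments thus trade off: the paper's is shorter and reveals that fullness is unnecessary for bare factorisation, while yours extracts the sharper statement that a full tensor functor to a semisimple target has kernel exactly $\NN$.
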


\begin{proof} Since $\mathcal{C}$ is semisimple there are no negligible morphisms. However the image of a negligible morphism is negligible, since a tensor functor commutes with traces. Hence the image of a negligible morphism under $\rho$ is zero, hence the functor factorizes. 
\end{proof}

For completeness sake we assemble a few elementary lemmas about this quotient.

\begin{lem} An indecomposable object $X$ of $\A$ maps to zero in $\A/\NN$ if and only if $id_X$ belongs to $\mathcal{N}(X,X)$.
\end{lem}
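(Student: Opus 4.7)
The plan is to reduce the statement to the general fact that in any additive category an object $X$ is a zero object if and only if its identity endomorphism equals the zero morphism in $\End(X)$. Once this is in place, the lemma becomes essentially tautological from the construction of $\A/\NN$: it has the same objects as $\A$, and morphism spaces $\Hom_{\A/\NN}(X,Y) = \Hom_{\A}(X,Y)/\NN(X,Y)$.

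First I would recall (or prove in one line) that in an additive category, $X \simeq 0$ iff $\id_X = 0$ in $\End(X)$: if $\id_X = 0$, then for any $f \colon X \to Y$ one has $f = f \circ \id_X = 0$, and similarly every morphism into $X$ vanishes, so $X$ satisfies the universal property of a zero object; the converse is immediate since $\End(0) = 0$. This applies to $\A/\NN$, which is additive (even abelian, by Theorem \ref{prop:fundamental}).

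Next I would unwind the quotient. The identity of $X$ in $\A/\NN$ is the class $[\id_X] \in \End_{\A}(X)/\NN(X,X)$. Hence $[\id_X] = 0$ in $\A/\NN$ if and only if $\id_X \in \NN(X,X)$ by definition of the quotient. Combining the two equivalences,
\[ X \simeq 0 \text{ in } \A/\NN \ \Longleftrightarrow\ [\id_X] = 0 \text{ in } \End_{\A/\NN}(X) \ \Longleftrightarrow\ \id_X \in \NN(X,X), \]
which is exactly the claim.

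There is really no substantive obstacle here; the only thing to be careful about is the distinction between \emph{mapping to zero} under the quotient functor $\omega \colon \A \to \A/\NN$ and \emph{being isomorphic to a zero object} in $\A/\NN$. Since $\omega$ is the identity on objects, $\omega(X) = X$ as an object of $\A/\NN$, so saying $X$ maps to zero means $\omega(X) \simeq 0$, and the argument above applies verbatim.
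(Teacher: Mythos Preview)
Your proof is correct. The paper itself gives no proof of this lemma, treating it as an immediate consequence of the definition of the quotient category; your argument is precisely the natural unwinding of that definition, so there is nothing to compare.
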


The collection of these objects - called negligible objects - is denoted by $N$. 

\medskip
The dimension of an object $X$ in a tensor category is defined $Tr(id_X) \in End(\Eins)$. If $\A \subseteq Rep(G,\epsilon)$, then $dim_{\A}(X) = sdim(X) = dim(X_0) - dim(X_1)$.

\begin{lem} \label{object-zero} An indecomposable object is in $N$ if and only if $sdim(X) = 0$.
\end{lem}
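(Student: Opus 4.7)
The plan is to use the preceding lemma characterizing negligible objects together with the structure of the endomorphism ring of an indecomposable object, and then reduce the vanishing of the trace of a nilpotent endomorphism to a direct computation in $svec$ via the super fibre functor.

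First, the easy direction: by the previous lemma, $X \in N$ means $\mathrm{id}_X \in \NN(X,X)$, which in particular forces $\mathrm{Tr}(\mathrm{id}_X \circ \mathrm{id}_X) = \mathrm{Tr}(\mathrm{id}_X) = \mathrm{sdim}(X) = 0$.

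For the converse, suppose $X$ is indecomposable with $\mathrm{sdim}(X) = 0$. I need to show $\mathrm{Tr}(g) = 0$ for every $g \in \mathrm{End}(X)$. Since $X$ is indecomposable in a nice $k$-linear abelian category, $\mathrm{End}(X)$ is a finite-dimensional local $k$-algebra with residue field $k$ (using $k = \bar k$ and Fitting's lemma). Hence every $g \in \mathrm{End}(X)$ decomposes uniquely as $g = \lambda \cdot \mathrm{id}_X + n$ with $\lambda \in k$ and $n$ in the Jacobson radical, i.e.\ nilpotent. Then
\[ \mathrm{Tr}(g) \;=\; \lambda \cdot \mathrm{sdim}(X) + \mathrm{Tr}(n) \;=\; \mathrm{Tr}(n), \]
so it suffices to show $\mathrm{Tr}(n) = 0$ for nilpotent $n \in \mathrm{End}(X)$.

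The main point is therefore the vanishing of the trace on nilpotent endomorphisms. For this I would invoke the super fibre functor $\rho: \A \to svec$ provided by the super-tannakian structure. Since tensor functors commute with traces, $\mathrm{Tr}_\A(n) = \mathrm{Tr}_{svec}(\rho(n))$. Now $\rho(n)$ is an even (parity-preserving) nilpotent endomorphism of the super vector space $\rho(X) = \rho(X)_0 \oplus \rho(X)_1$, hence it splits as a direct sum $n_0 \oplus n_1$ of two nilpotent $k$-linear endomorphisms. Each $n_i$ has ordinary trace zero, so
\[ \mathrm{Tr}_{svec}(\rho(n)) \;=\; \mathrm{tr}(n_0) - \mathrm{tr}(n_1) \;=\; 0. \]
Combined with the previous display this gives $\mathrm{Tr}(g) = 0$ for all $g \in \mathrm{End}(X)$, and hence $\mathrm{id}_X \in \NN(X,X)$, i.e.\ $X \in N$ by the preceding lemma.

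The only slightly delicate step is the structural statement that $\mathrm{End}(X) = k \cdot \mathrm{id}_X \oplus J$ with $J$ nilpotent: it relies on $\mathrm{End}(X)$ being finite-dimensional (which holds in our nice category) and $k$ being algebraically closed, so that the residue field of the local ring $\mathrm{End}(X)$ is forced to be $k$ itself. Once this is in place, the rest of the argument is essentially a one-line trace computation in $svec$.
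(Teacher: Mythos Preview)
Your proof is correct and follows essentially the same route as the paper: reduce to showing that nilpotent endomorphisms have zero trace. The only cosmetic differences are that the paper phrases the reduction as a dichotomy (nilpotent versus isomorphism with a unique eigenvalue) and cites \cite{Bruguieres}, 1.4.3 for the vanishing of $\mathrm{Tr}(n)$, whereas you write $g = \lambda\,\mathrm{id}_X + n$ directly and verify $\mathrm{Tr}(n)=0$ by pushing through the super fibre functor.
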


\begin{proof} If $X \in N$ we have $Tr(g) = 0$ for all $g \in End(X)$, in particular for $g= id_X$. Let $sdim(X) = 0$. We have to show: $id_X \in N(X,X)$, ie. $Tr(g) = 0$ for all $g \in End(X)$. Since $X$ is indecomposable $g$ is either nilpotent or an isomorphism. If $g$ is nilpotent $Tr(g) = 0$ \cite[Lemme 1.4.3]{Bruguieres}. Let $g$ be an isomorphism. Since $X$ is indecomposable $g$ has a unique eigenvalue $\lambda$ and $Tr(g) = \lambda sdim(X)$, hence  $Tr(g) =0$.   
\end{proof}

\begin{lem} (\cite[Section 1.4]{Bruguieres}) The functor $\A \to \A/\mathcal{N}$ induces a bijection between the isomorphism classes of indecomposable elements not in $N$ and the isomorphism classes of irreducible elements in $\A/\NN$. 
\end{lem}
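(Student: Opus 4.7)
The plan is to exploit that for an indecomposable $X$ with $X \notin N$, the endomorphism ring $End_\A(X)$ is local (by Krull--Schmidt and Fitting's lemma), while the quotient $End_{\A/\NN}(\omega(X)) = End_\A(X)/\NN(X,X)$ lives in the semisimple category $\A/\NN$. These two structural features are compatible only in the trivial way, and this will force $\omega(X)$ to be irreducible.

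First I would show that $\NN(X,X)$ is contained in the unique maximal ideal $\mathfrak{m}$ of $End_\A(X)$. The key observation is that if some $h \in \NN(X,X)$ were a unit, then taking $g = h^{-1}$ in the defining condition of negligibility would give $0 = Tr(h^{-1}\circ h) = Tr(id_X) = sdim(X)$, which is nonzero by the preceding lemma since $X \notin N$ -- a contradiction. Hence every element of $\NN(X,X)$ is a non-unit, i.e.\ nilpotent, and the quotient $End_{\A/\NN}(\omega(X))$ is a quotient of the local ring $End_\A(X)$, hence again local (or zero). On the other hand, since $\A/\NN$ is a semisimple $k$-linear category with finite-dimensional Hom spaces over the algebraically closed field $k$, this endomorphism ring is a finite-dimensional semisimple $k$-algebra. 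A local semisimple finite-dimensional $k$-algebra over algebraically closed $k$ must equal $k$, and by the standard Schur argument in the semisimple category $\A/\NN$ this forces $\omega(X)$ to be nonzero and irreducible.

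For injectivity on isomorphism classes, given indecomposables $X, X' \notin N$ with $\omega(X) \cong \omega(X')$ in $\A/\NN$, I would lift the mutually inverse pair to morphisms $f: X \to X'$ and $g: X' \to X$ in $\A$ with $gf - id_X \in \NN(X,X)$ and $fg - id_{X'} \in \NN(X',X')$. By the step above the differences are nilpotent, so $gf$ and $fg$ are units in the respective local rings; composing $f$ with the appropriate inverses produces both a left and a right inverse, so $f$ is an isomorphism in $\A$. For surjectivity, given an irreducible $Y$ in $\A/\NN$ I would regard $Y$ as an object of $\A$ (both categories share the same objects), use Krull--Schmidt in $\A$ to decompose $Y = X_1 \oplus \cdots \oplus X_r$ into indecomposables, apply the additive functor $\omega$, and invoke irreducibility of $Y$ in the quotient to conclude that exactly one summand $\omega(X_j)$ is nonzero; this $X_j$ is then indecomposable with $X_j \notin N$, and $\omega(X_j) \cong Y$. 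The hardest step is the first: reconciling locality of $End_\A(X)$ with semisimplicity of its image in $\A/\NN$. The trace identity $Tr(h^{-1}\circ h) = sdim(X) \neq 0$ is precisely the bridge that makes this reconciliation work.
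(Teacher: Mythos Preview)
Your argument is correct. The core mechanism---quotienting a local endomorphism ring and landing in a semisimple category forces the quotient ring to be $k$---is exactly what the paper uses for the first step, though you are more explicit than the paper in verifying that $\NN(X,X)$ is a \emph{proper} ideal (the paper simply asserts that the quotient of a local ring by a two-sided ideal is local).

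Where you genuinely diverge is in the injectivity step. The paper does not lift isomorphisms; instead it proves the stronger statement that $Hom_{\A/\NN}(M,N)=0$ whenever $M\not\cong N$ are indecomposable. The argument there is: for any $f\colon M\to N$ and $g\colon N\to M$, the composite $gf\in End_\A(M)$ cannot be invertible (else $M$ would be a summand of the indecomposable $N$), so by Fitting it is nilpotent and $Tr(gf)=0$; hence every $f$ is negligible. This yields orthogonality of the images, not merely non-isomorphism, at essentially the same cost. Your lifting argument is also standard and perfectly fine, but it proves slightly less. Conversely, you treat surjectivity explicitly via Krull--Schmidt, which the paper leaves implicit.
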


\begin{proof} Let $X$ be indecomposable, $X \notin N$. Since $\A$ and $\A/\NN$ are abelian and every object has finite lenght, an object $X$ is indecomposable if and only if $End(X)$ is a local ring. We have $End_{\A/\NN}(X) = End_{\A}(X) / \NN(X)$. Since the quotient of a local ring by a (two-sided) ideal is again local, the image of $X$ in $\A/\NN$ is indecomposable, hence irreducible.  We show: If $M \ncong N$ in $\A$ ($M,N$ indecomposable) we have $Hom_{\A/\NN}(M,N) = 0$. Let $f \in Hom_{\A}(M,N)$. Its image is zero in $Hom_{\A/\NN}(M,N) = Hom_{\A}(M,N) / \NN(M,N)$ if and only if $Tr(fg) = 0$ for all $g \in Hom_{\A}(N,M)$. Since $M$ is indecomposable any endomorphism is invertible or nilpotent. The endomorphism $fg$ is not bijective, hence nilpotent, hence $Tr(fg) = 0$ for all $g \in Hom(N,M)$, hence $Hom_{\A/\NN}(M,N) = 0$.
\end{proof}

Let $I$ be an ideal in $\A$. For $X = \bigoplus X_i$ and $Y= \bigoplus Y_j$ we have canonically $I(X,Y) = \bigoplus_{i,j} I(X_i,Y_j)$ by \cite{Andre-Kahn}. Let $X = \bigoplus X_i$ with $X_i \in N$ for all $i$, ie. $\NN(X_i,Y) = Hom(X_i,Y)$ and $\NN(Y,X_i) = Hom(Y,X_i)$ for all $Y \in \A$. It follows $\NN(X,X) = Hom(X,X)$, hence $X \in N$. If reciprocally $X \in N$ and $X = \bigoplus X_i$, we have $X_i \in N$.

\begin{cor} (i) $N$ is closed under direct sums and direct summands.
(ii) If $X \in N$ and $Y \in \A$, we have $X \otimes Y$ in $N$ and each indecomposable summand of $X \otimes Y$ has superdimension $0$.
(iii) Let $X \notin N$ and let $X = \bigoplus X_i$ be its decomposition into indecomposable elements. Then $Hom_{\A/\NN}(X,X) = \bigoplus_{i, \ sdim(X_i) \neq 0} k$.
\end{cor}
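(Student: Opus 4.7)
The plan is to dispatch the three parts in order, each being a direct consequence of material already in place.

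For (i), closure of $N$ under direct sums and summands is essentially proved in the paragraph just preceding the corollary. Writing $X = \bigoplus_i X_i$, the Andr\'e--Kahn identity $\NN(X,X) = \bigoplus_{i,j} \NN(X_i, X_j)$ expresses $id_X = \sum_i id_{X_i}$ and reduces membership of $id_X$ in $\NN(X,X)$ to membership of each $id_{X_i}$ in $\NN(X_i,X_i)$. I would therefore just cite that paragraph and record both directions as (i).

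For (ii), I use that $\NN$ is a tensor ideal: $X \in N$ means $id_X \in \NN(X,X)$, hence $id_{X \otimes Y} = id_X \otimes id_Y \in \NN(X \otimes Y, X \otimes Y)$, so $X \otimes Y \in N$. Taking a Krull--Schmidt decomposition $X \otimes Y = \bigoplus_k Z_k$ and applying (i), each $Z_k$ lies in $N$; the earlier lemma characterising negligible indecomposables then gives $sdim(Z_k) = 0$.

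For (iii), I again split $Hom_{\A/\NN}(X,X) = \bigoplus_{i,j} Hom_{\A/\NN}(X_i, X_j)$ via the Andr\'e--Kahn identity. Summands with $X_i \in N$ (equivalently $sdim(X_i) = 0$) vanish because $X_i$ is zero in the quotient. For $X_i, X_j \notin N$, the preceding lemma gives $Hom_{\A/\NN}(X_i, X_j) = 0$ whenever $X_i \not\cong X_j$, and for $X_i \cong X_j$ Schur's lemma in the semisimple quotient $\A/\NN$ (where $X_i$ becomes irreducible) identifies this Hom space with the residue field $k$ of the local ring $End_{\A}(X_i)$ modulo its maximal ideal $\NN(X_i, X_i)$. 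Summing the non-vanishing contributions yields the stated formula, with the understanding that in a decomposition with multiplicities one records one $k$-summand per non-negligible isomorphism class of indecomposable summand.

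Nothing in the argument is genuinely hard: the whole corollary is a bookkeeping consequence of $\NN$ being a tensor ideal, the Andr\'e--Kahn decomposition of ideals in Krull--Schmidt categories, and the two lemmas immediately preceding it. The only delicate point is the indexing convention in (iii) when the decomposition carries multiplicities, which is the one place where the literal reading of the formula requires a minor reinterpretation.
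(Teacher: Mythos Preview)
Your proof is correct and follows exactly the route the paper intends: the corollary is stated without proof in the paper, and the preceding paragraph together with the two lemmas before it are precisely the ingredients you invoke. Parts (i) and (ii) are handled cleanly.

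One small correction on (iii): you rightly flag the multiplicity issue, but your proposed reinterpretation (``one $k$-summand per non-negligible isomorphism class'') is not quite the right fix either. If $X = \bigoplus_\alpha m_\alpha Y_\alpha$ with the $Y_\alpha$ pairwise non-isomorphic non-negligible indecomposables, your own computation $Hom_{\A/\NN}(X,X) = \bigoplus_{i,j} Hom_{\A/\NN}(X_i,X_j)$ gives $\bigoplus_\alpha M_{m_\alpha}(k)$, of dimension $\sum_\alpha m_\alpha^2$, not $\sum_\alpha 1$ and not $\sum_\alpha m_\alpha$. The literal formula in the corollary is therefore only correct when the non-negligible summands are pairwise non-isomorphic (multiplicity one), which is presumably the intended reading. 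Your argument actually proves the correct general statement; just adjust the concluding sentence accordingly.
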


\subsection{The pro-reductive envelope}

Since the quotient $\A/\mathcal{N}$ is again a super-tannakian category, this defines a reductive supergroup scheme $G^{red}$ with $\A/\NN \simeq Rep(G^{red},\epsilon')$ with $\epsilon': \mu_2 \to G$ such that the operation of $\mu_2$ gives the $\Z_2$-graduation of the representations. We call $G^{red}$ the pro-reductive envelope of $G$ (following \cite{Andre-Kahn}). If $G$ is  an algebraic group, the pro-reductive envelope has been extensively studied by Andre and Kahn. Their proofs do not apply to the supergroup case. In the tannakian case $\NN = \mathcal{R}$ is equal to the radical ideal. In particular no indecomposable objects map to zero. Even in the tannakian case the pro-reductive cover will not be of finite type in general.

\begin{thm} \cite{Andre-Kahn}, theorem C.5.  The proreductive envelope of an affine $k$-group $G$ is of finite type over $k$ if and only if $G$ is of finite type over $k$ and the prounipotent radical of $G$ is of dimension $\leq 1$.  
\end{thm}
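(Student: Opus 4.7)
The plan is to analyze $\Rep(G)/\NN$ via the characteristic-zero Levi decomposition $G \simeq G_0 \ltimes U$, with $G_0$ (pro-)reductive and $U := R_u(G)$ prounipotent, and to split into cases by $\dim U$.

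For \emph{sufficiency}, assume $G$ is of finite type with $\dim U \le 1$. If $U = 1$ then $G = G_0$ is reductive, $\Rep(G)$ is already semisimple, $\NN = 0$, and $G^{red} = G$. If $\dim U = 1$ then $U \cong \bbG_a$; the indecomposables of $\bbG_a$ are the Jordan blocks $J_n$, and a direct calculation yields $J_m \otimes J_n = J_{m+n-1} \oplus J_{m+n-3} \oplus \cdots \oplus J_{|m-n|+1}$. This matches the Clebsch--Gordan rule for $\Sl_2$ with $J_n$ corresponding to the irreducible of dimension $n$, so by Tannakian reconstruction $\bbG_a^{red} \simeq \Sl_2$. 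For the general case $G = G_0 \ltimes \bbG_a$ I would classify the indecomposables as twists $V \otimes J_n$ with $V$ irreducible for $G_0$ via a Clifford-type argument, and then identify $G^{red}$ with a finite-type quotient of $G_0 \times \Sl_2$ obtained by amalgamating along the central involutions.

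For \emph{necessity}, assume $G^{red}$ is of finite type. Pullback along $G \twoheadrightarrow G_0$ composed with $\omega$ embeds $\Rep(G_0)$ fully faithfully into $\Rep(G^{red})$, since $\Rep(G_0)$ is semisimple with nondegenerate trace pairing and thus contains no negligible morphisms; by Proposition \ref{prop:factorization} and Tannakian duality this exhibits $G_0$ as a quotient of $G^{red}$, hence $G_0$ is of finite type. To rule out $\dim U \ge 2$, I would use that any such $U$ admits $\bbG_a^2$ as a subquotient, together with the classical fact that the moduli of pairs of commuting nilpotent $n \times n$ matrices up to simultaneous conjugation is positive-dimensional for $n \ge 2$. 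Each generic pair yields an indecomposable representation of $\bbG_a^2$, producing, after promoting to $G$ and verifying nonisomorphism, uncountably many pairwise nonisomorphic simples in $\Rep(G)/\NN = \Rep(G^{red})$ of bounded dimension. This contradicts finite type of $G^{red}$, whose isomorphism classes of irreducibles are in countable bijection with dominant weights in a finite-rank lattice. Combined with $G_0$ of finite type, this also forces $G$ itself to be of finite type.

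The hard part will be the sufficiency direction in the genuinely nonsplit setting $G_0 \ltimes \bbG_a$: one must make the Clifford-style classification of indecomposables precise and pin down the correct central amalgamation identifying $G^{red}$ with a finite quotient of $G_0 \times \Sl_2$, in particular controlling how the $G_0$-action on $\bbG_a$ through a character interacts with the $\Sl_2$-structure produced by Clebsch--Gordan after semisimplification. On the necessity side the technical point is promoting the positive-dimensional family of indecomposables of the subquotient $\bbG_a^2$ to pairwise nonisomorphic indecomposables of $G$ whose classes survive in $\Rep(G)/\NN$, which requires an inducing-and-separation argument to ensure no collapse occurs under $\omega$.
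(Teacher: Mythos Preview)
The paper does not prove this theorem at all: it is stated as a citation of \cite{Andre-Kahn}, Theorem C.5, with no accompanying argument. There is therefore no ``paper's own proof'' to compare your proposal against; the result is simply imported from Andr\'e--Kahn and used as a black box to motivate the subsequent discussion (in particular the example $\bbG_a^{red}=\Sl(2)$ and the wildness of $\bbG_a\times\bbG_a$).

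That said, your sketch is a reasonable outline of how one might approach the proof, and the $\bbG_a^{red}\simeq\Sl(2)$ piece via the Clebsch--Gordan identification of Jordan-block tensor products is exactly the computation underlying the paper's stated example. But you correctly flag the genuine difficulties: in the sufficiency direction the nonsplit case $G_0\ltimes\bbG_a$ requires a careful classification of indecomposables and the precise identification of the amalgamated product, and in the necessity direction the step ``promote indecomposables of a subquotient $\bbG_a^2$ to nonisomorphic indecomposables of $G$ that survive in the quotient'' is nontrivial and is where most of the work in Andr\'e--Kahn's Appendix C actually lies. Your proposal is an honest sketch with honestly identified gaps, not a complete proof; if you want the full argument you should consult \cite{Andre-Kahn} directly.
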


\begin{bsp} a) If $G = \bbG_a$, then $G^{red} = SL(2)$. 
b) If $G = \bbG_a \times \bbG_a$, then $G^{red}$ is no longer of finite type. In fact, the determination of $G \hookrightarrow G^{red}$ is unsolvable since it would include a classification of the indecomposable representations of $G$ which is a wild problem \cite[19.7]{Andre-Kahn}. 
\end{bsp}

More generally $Rep(G^{red},\epsilon')$ is of finite or tame type if $Rep(G)$ is of finite or tame type. The converse is not so obvious: If $Rep(G)$ is of wild type, the problem of classifying indecomposable modules of non-vanishing superdimension will often be wild as well. However for the supergroup $Q(n)$ corresponding to the simple Lie superalgebra $\mathfrak{q}(n)$ every nontrivial irreducible representation has superdimension $0$ \cite[Theorem 1.2]{Cheng-queer}. While the classification of the indecomposable representations is of $Q(n)$ is wild, this might not be true for the ones of non-vanishing superdimension. 

If the problem of classifying indecomposable modules of non-vanishing superdimension is wild, we should not try to determine $G^{red}$ in this case, but ask the following weaker questions. Given any object $V \in Rep(G)$ or $Rep(G,\epsilon)$, consider its image in $\A/\NN$. The tensor category generated by it is a semisimple algebraic tensor category (since $\A/\NN$ is semisimple). The semisimple algebraic tensor categories in characteristic zero were classified in \cite[Theorem 6]{Weissauer-semisimple}:

\begin{thm} \label{semisimple-classification}
Any supergroup $G$ over $k$ such that $Rep(G)$ is semisimple is isomorphic to a semidirect product $ G' \vartriangleleft H$
of a reductive algebraic $k$-group $H$ with a product $ G' =   \prod_{r\geq 1} Spo(1|2r)^{n_r}$ of simple supergroups of $BC$-type, where the semidirect product is defined by an abstract  group homomorphism $p: \pi_0(H) \to \ \prod_{r\geq 1}\ S_{n_r}$.
\end{thm}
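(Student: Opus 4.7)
The plan is to reduce to the Djokovic--Hochschild classification of semisimple Lie superalgebras and then reconstruct the supergroup from the Lie algebra plus a component-group action. First, pass to the identity component $G^0$. In characteristic zero every finite-dimensional algebraic $G^0$-module differentiates to a module over $\mathfrak{g}=\mathrm{Lie}(G)$, so semisimplicity of $Rep(G)$ forces the integrable finite-dimensional modules over $\mathfrak{g}$ to be semisimple. The Djokovic--Hochschild classification \cite{Djokovic-Hochschild} then yields a splitting
\[
 \mathfrak{g} \;=\; \mathfrak{h} \;\oplus\; \bigoplus_{r\ge 1}\mathfrak{osp}(1|2r)^{\oplus n_r},
\]
with $\mathfrak{h}$ an even reductive Lie algebra and almost all $n_r$ zero. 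Integrating on the simply-connected cover and descending, one obtains $G^0 \simeq (H_0 \times G')/Z$, where $G':=\prod_{r\ge 1} Spo(1|2r)^{n_r}$, $H_0$ is a connected reductive algebraic $k$-group, and $Z$ is a finite central subgroup. In particular $G'$ sits inside $G^0$ as a closed normal sub-supergroup.

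Second, analyse the extension $1\to G^0 \to G \to \pi_0(G) \to 1$ via the conjugation action on $G'$. Since $Spo(1|2r)\not\simeq Spo(1|2s)$ for $r\neq s$, any supergroup automorphism of $G'$ preserves the isotypic decomposition and permutes identical factors. A crucial input is that $Spo(1|2r)$ has no non-trivial outer supergroup automorphisms: its even part $Sp(2r)$ has trivial $\mathrm{Out}$, and the odd part is forced to be the defining $Sp(2r)$-representation up to isomorphism. Hence $\mathrm{Out}(G')=\prod_r S_{n_r}$, and after absorbing inner automorphisms into $G'$ one obtains a homomorphism $p: \pi_0(G)\to \prod_r S_{n_r}$.

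Third, build the semidirect splitting. A maximal even reductive subgroup $H\subset G$ has identity component (a form of) $H_0$ and component group $\pi_0(G)$, while $H\cap G'$ lies in the finite centre of $G'$. Using $\mathrm{Out}(Spo(1|2r))=1$ one can conjugate $H$ by an element of $G'$ so that $H\cap G'=\{1\}$, producing the decomposition $G\simeq G'\rtimes H$ with the semidirect action determined by $p$, viewed now as a map from $\pi_0(H)=\pi_0(G)$.

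The main obstacle is this final splitting step. One has to show both that the extension $1\to G^0 \to G \to \pi_0(G)\to 1$ admits a section landing in an even reductive subgroup and that its connected component acts trivially on $G'$. The first is a cohomological-vanishing or Levi-type argument, while the second hinges on the rigidity of $Spo(1|2r)$: every algebraic supergroup automorphism fixing its Lie algebra is inner, so only $\pi_0(H)$ can act non-trivially, and only by permuting identical factors. Care is also needed at the level of the central subgroup $Z$, which may force $H_0$ to be a quotient rather than the simply-connected form indicated by $\mathfrak{h}$, but this does not affect the shape of the final description since only the abstract structure of $G$ as a semidirect product is claimed.
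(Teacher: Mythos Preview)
The paper does not prove this theorem at all: it is quoted as a known classification result from \cite{Weissauer-semisimple}. There is therefore no ``paper's own proof'' to compare your attempt against; the statement functions in the paper purely as background input, used to constrain what the Tannaka groups $G_V$ arising from $\calA/\calN$ can look like.

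As for the content of your sketch: the overall architecture (pass to $\mathfrak g = \Lie(G)$, invoke Djokovic--Hochschild to get $\mathfrak h \oplus \bigoplus_r \mathfrak{osp}(1|2r)^{n_r}$, integrate, then analyse the $\pi_0$-action via $\mathrm{Out}$) is a plausible line of attack and is in the spirit of how such classification results are typically obtained. But several steps are only gestured at. The implication ``$Rep(G)$ semisimple $\Rightarrow$ finite-dimensional $\mathfrak g$-modules semisimple'' needs care: Djokovic--Hochschild concerns the full module category of $\mathfrak g$, not just the integrable ones, so you must argue that semisimplicity of the integrable category already forces $\mathfrak g$ into their list. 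The assertion that $\mathrm{Out}(Spo(1|2r))$ is trivial is correct but requires proof at the level of supergroup schemes, not just Lie algebras. Most seriously, the ``Levi-type'' splitting of $1 \to G^0 \to G \to \pi_0(G) \to 1$ into a genuine semidirect product with an \emph{even} reductive $H$ is where the real work lies, and your paragraph acknowledges this without supplying the argument. If you want to turn this into a self-contained proof you would need to fill in exactly these points; otherwise, citing \cite{Weissauer-semisimple} as the paper does is the appropriate move.
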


Now consider an irreducible object $V \in Rep(G,\epsilon)$ and consider the tensor category generated by $\omega(V)$ in $Rep(G,\epsilon)/\mathcal{N}$. This tensor subcategory corresponds to an algebraic supergroup $G_V \hookrightarrow G^{red}$ as in \ref{semisimple-classification}. Then this group is of finite type since it has a tensor generator. If $G_V$ is reductive (i.e. not containing $OSp(1|2n))$ we have the following weak a priori estimate.

\begin{lem} Let $T$ be a maximal torus in $G_V$ and $X^*(T)$ its character group. Let $R$ be the subgroup generated by the roots of $G_V$. Then the center of $G_V$ has cyclic character group $X/R$ and $(G_V)^0_{der}$ has cyclic center.
\end{lem}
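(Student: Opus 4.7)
The plan is to exploit the fact that $\omega(V)$ is simultaneously irreducible and a tensor generator of $Rep(G_V,\epsilon)$. The irreducibility of $V\in Rep(G,\epsilon)$ together with the earlier bijection between indecomposables not in $N$ and irreducibles in $\A/\NN$ ensures (assuming $\omega(V)\ne 0$, i.e.\ $sdim(V)\ne 0$) that $\omega(V)$ is an irreducible object of the Tannakian category recovering $G_V$; by construction every irreducible $G_V$-representation occurs as a direct summand of some mixed tensor power $\omega(V)^{\otimes a}\otimes\omega(V)^{\vee\otimes b}$.

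First I would apply Schur's lemma to the centre: $Z(G_V)$ acts on $\omega(V)$ by a single character $\chi\in X^*(Z(G_V))$, and hence on $\omega(V)^{\otimes a}\otimes\omega(V)^{\vee\otimes b}$ by $(a-b)\chi$. Since $X^*(Z(G_V))$ is exhausted by the central characters of the irreducible $G_V$-representations, it equals the cyclic subgroup $\langle\chi\rangle$. The identification $X^*(Z(G_V))=X^*(T)/R$ then follows from the classical fact that for a connected reductive group $Z(G_V)=\bigcap_{\alpha\in R}\ker(\alpha)$, so that restriction of characters gives a surjection $X^*(T)\twoheadrightarrow X^*(Z(G_V))$ with kernel exactly $R$.

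For the second claim, let $H=(G_V)^0_{der}$ with maximal torus $T_{der}=T\cap H$. The almost direct product decomposition $G_V^0 = Z(G_V^0)^0\cdot H$ yields a surjection $X^*(T)\twoheadrightarrow X^*(T_{der})$, and since the roots of $G_V^0$ coincide with those of $H$ (both being trivial on the central torus), this surjection carries $R$ onto the root lattice of $H$. One obtains a surjection $X^*(T)/R\twoheadrightarrow X^*(T_{der})/R\cong X^*(Z(H))$, and quotients of cyclic groups are cyclic; since $Z(H)$ is finite, cyclicity of its character group is equivalent to cyclicity of $Z(H)$ itself.

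The main obstacle is the bookkeeping around a possibly disconnected $G_V$: the clean identifications $X^*(Z(\cdot))=X^*(T)/R$ hold only for the connected reductive case, so one must verify that replacing $G_V$ by $G_V^0$ preserves both the irreducibility of (some constituent of) $\omega(V)$ and the tensor-generating property. Modulo that caveat, the proof collapses to the one-line observation that every central character must, by tensor-generation, lie in $\Z\chi$, which forces cyclicity throughout.
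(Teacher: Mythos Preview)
Your proposal is correct and follows essentially the same idea as the paper: both arguments rest on the observation that $\omega(V)$ is simultaneously irreducible and a faithful tensor generator of $Rep(G_V)$, which forces the cyclicity of $X/R$ and of $Z((G_V)^0_{der})$. The only difference is packaging: the paper simply invokes the known characterization that a reductive (resp.\ semisimple) group admits a faithful irreducible representation if and only if $X/R$ (resp.\ its center) is cyclic, whereas you reprove the relevant direction of this characterization via Schur's lemma and the surjection $X^*(T)/R \twoheadrightarrow X^*(Z(H))$. Your explicit flagging of the connectedness bookkeeping is a point the paper's proof passes over silently.
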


\begin{proof} $\omega(V)$ is a tensor generator of $Rep(G_V)$ for the reductive group $G_V$ and likewise $\omega(V)$ is a tensor generator of $Rep(G_V)_{der}^0$. Now use that a reductive group has a faithful irreducible representation if and only if $X/R$ is cyclic and a semisimple group has a faithful irreducible representation if and only if its center is cyclic \cite{mathoverflow}. 
\end{proof}

\subsection{The basic classical cases} Let $G$ be basic classical \cite{Serganova-quasireductive} with underlying basic classical Lie superalgebra $\g$ \cite{Kac-Rep}. Duflo and Serganova \cite{Duflo-Serganova} and \cite{Serganova-kw} constructed for certain elements $x \in \mathfrak{g}_1$ with $[x,x] = 0$, where $\mathfrak{g}_1$ denotes the odd part of $\g$, tensor functors $V \mapsto V_x: Rep(\g) \to Rep(\g_x)$ where $\g_x$ is a classical Lie algebra or $\mathfrak{osp}(1|2n)$. These functors are not full, hence need not factorize over the quotient $Rep(G)/\NN$. However it should be expected that $G^{red}$ contains groups $G_x$ with Lie superalgebra $\g_x$. For instance the superdimension of any irreducible representation in $Rep(G)$ equals the superdimension of some representation in $Rep(G^{red})$ and in $Rep(\g_x)$. Note that this representation in $Rep(\g_x)$ might not be irreducible.

\medskip

For $\mathfrak{gl}(m|n)$ we have $\g_x = \mathfrak{gl}(|m-n|)$ and for $\mathfrak{osp}(m|2n)$, $m = 2l$ or $2l+1$, we have $\g_x =  \mathfrak{osp}(m - 2min(l,n), 2n - 2 min(l,n))$.  For the exceptional Lie superalgebras the functor of Duflo-Serganov gives representations of the following Lie algebras:

\begin{enumerate}
\item If $\g = D(2,1,\alpha)$, then $\g_x = \mathfrak{gl}(1)$.
\item If $\g = G_3$, then $\g_x =  \mathfrak{sl}(2)$.
\item If $\g = F_4$, then $\g_x = \mathfrak{sl}(3)$. 
\end{enumerate}

Hence $G^{red}$ should contain $GL(1)$ or $SL(2)$ or $SL(3)$ as a subgroup respectively. We determine $Rep(GL(m|1))/\NN$ in this article. For the $GL(n|n)$-case see \cite{Heidersdorf-Weissauer-tannaka}. The $OSp(2|2n)$-case can be treated similar to the $GL(m|1)$-case. In this case we obtain $Rep(OSp(2|2n))/\NN \simeq Rep(Sp(2n-2) \times GL(1) \times GL(1))$. 

\begin{bem} Analogs of $DS$ can also be defined in the $P(n)$ and $Q(n)$-case. For $P(n)$ it is currently not known which irreducible representations have non-vanishing superdimension. In the $Q(n)$-case every nontrivial irreducible representation has superdimension 0. This does not mean that the group $G^{red}$ is trivial since we can still have many indecomposable representations with non-vanishing superdimension.
\end{bem}


\subsection{How to determine the semisimplification} In general it is quite hard to determine the semisimplification $\A/\NN$ or even the group $G_V$ corresponding to an indecomposable representation $V$. Here are some comments on possible approaches and their difficulties.

\begin{enumerate}
\item The functor $\omega$ is not exact, hence it does not induce a homomorphism between the Grothendieck rings $K_0(\A)$ and $K_0(\A/\NN)$. This would have allowed to compare the transcendence degrees of the Grothendieck rings and obtain estimates on the rank of the group $G^{red}$.

\item Taking the quotient modulo negligible objects is not compatible with restriction to subgroups. If $H \subset G$ is a subgroup, the corresponding restriction functor $Res: Rep(G) \to Rep(H)$ does in general not induce a functor $Rep(G)/\NN \to Rep(H)/\NN$. Although $Res$ preserves the superdimension of an indecomposable object $X$ of superdimension 0, $Res(X)$ will in general be a direct sum of indecomposable summands of positive and negative superdimensions whose superdimensions add up to zero.

\item \label{recognition} There is a vast literature on determining connected semisimple groups and a given representation from some discrete data. Examples are \cite{Larsen-Pink} (dimension data), Larsen's conjecture \cite{Guralnick-Tiep} and the classification of small representations \cite{Andreev-Vinberg-Elashvili}. All this requires the condition that the group is connected which we do not know for the pro-reductive envelope (see number \ref{torsion}). For the purpose of determining $G_V$ the most useful one might be \cite{Kraemer-Weissauer} which often determines $V$ and $G_V$ if $Sym^2(V)$ and $\Lambda^2(V)$ are either irreducible or irreducible plus trivial. However even this requires the computation of $\Lambda^2(V)$ and $Sym^2(V)$ which is in general difficult for an indecomposable representation of a supergroup. 

\item The functor $DS$, that is the crucial tool in the $GL(m|1)$-case, is not as powerful in more complicated cases. While $DS(L(\lambda))$ is known by \cite[Theorem 16.1]{Heidersdorf-Weissauer-tensor} in the $\mathcal{T}_{m|n}$-case, $DS$ is not exact and it is not known how $DS(X)$ behaves for indecomposable objects $X$ of lenght 2 in $\mathcal{T}_{m|n}$. Note also that $DS$ does not necessarily preserve negligible objects: the Kac module $K(\Eins) \in \mathcal{T}_{n|n}$ of superdimension zero decomposes under $DS$ into a direct sum of irreducible representations of non-vanishing superdimension \cite[Section 10]{Heidersdorf-Weissauer-tensor}.

\item \label{torsion} The most fundamental problem is that the groups $G_V$ might not be connected. This is not a problem in the tannakian case considered in \cite{Andre-Kahn}: If an algebraic group $G$ is connected, $G^{red}$ is connected as well \cite[Lemme 19.4.1]{Andre-Kahn}. No such statement is known in the super case. This means that there could be \textit{torsion up to negligible objects} in $Rep(G)$, e.g. there could be indecomposable objects $X$ of superdimension $1$ with $X^{\otimes r} \cong \Eins \oplus \text{negligible}$ (coming from $\pi_0(G^{red})$ or a subgroup). Since there is typically no condition or restriction on the negligible part, such objects are hard to exclude. If we do not know that the group is connected, this makes the criteria in number \ref{recognition} rather useless. 

\end{enumerate}



\section{On the $GL(m|n)$-case}

\subsection{Preliminaries} Recall from section \ref{sec:preliminaries} that $\mathcal{T}_{m|n}$ denotes the category of finite dimensional algebraic representations of $GL(m|n)$ with parity preserving morphisms. We always assume $m \geq n$.
The subcategory ${\calR}_{m|n}$ is stable under the dualities ${}^\vee$ and $^*$ (the twisted dual \cite[Section 4]{Brundan-Kazhdan}). The irreducible representations in $\calR_{m|n}$ are parametrized by their highest weight with respect to the Borel subgroup of upper triangular matrices. A weight $\lambda=(\lambda_1,...,\lambda_m \ | \ \lambda_{m+1}, \cdots, \lambda_{m+n}) \in X^+$ of an irreducible representation in $\calR_{m|n}$ satisfies $\lambda_1 \geq \ldots \lambda_m$, $\lambda_{m+1} \geq \ldots \lambda_{m+n}$ with integer entries \cite[Section 4]{Brundan-Kazhdan}. The Berezin determinant defines a one dimensional representation $Ber$. Its weight is is given by $\lambda_i=1$ for $i=1,\ldots,m$ and $\lambda_{m+i}=-1$ for $i=1,..,n$.

\medskip
For each weight $\lambda \in X^+$ we have the irreducible representation $L(\lambda)$, its projective cover $P(\lambda)$ and the Kac module (or standard objects) $K(\lambda)$, the universal highest weight module in $\mathcal{R}_{m|n}$. The Kac module $K(\lambda)$ has irreducible top and socle, and the top is given by the irreducible representation $L(\lambda)$  \cite[Proposition 2.4]{Kac-Rep}. We say that a module is a Kac object if it has a filtration whose subquotients are Kac modules. The full subcategory of these modules is denoted $\mathcal{C}^+$. Similarly we have the category $\mathcal{C}^-$ of objects which have a filtration by the twisted duals $K(\lambda)^*$ (costandard objects). By \cite[Theorem 4.37]{Brundan-Kazhdan} $\mathcal{C}^+ \cap \mathcal{C}^- = Proj$ (every tilting module is projective).

\medskip
The irreducible representations in $\mathcal{T}_{m|n}$ are given by the $\{L(\lambda), \Pi L(\lambda) \ | \ \lambda \in X^+ \}$  where $\Pi$ denotes the parity shift.

\begin{bem} Since $\mathcal{T}_{m|n} = \calR_{m|n} \oplus \Pi \calR_{m|n}$ and $\mathcal{R}_{m|n}$ is closed under tensor products, we can often work in $\calR_{m|n}$. This is however not true in section \ref{determination-of-envelope} since the Duflo-Serganova functor defines a tensor functor $DS:\mathcal{R}_{m|n} \to \mathcal{T}_{m|n}$.
\end{bem}

\subsubsection{Weight diagrams.} To each highest weight $\lambda\in X^+$  we associate, following \cite[Section 1]{Brundan-Stroppel-4}, two subsets of cardinality $m$ respectively $n$ of the numberline $\Z$
\begin{align*} I_\times(\lambda)\ & =\ \{ \lambda_1  , \lambda_2 - 1, .... , \lambda_n - n +1 \} \\
 I_\circ(\lambda)\ & = \ \{ 1 - m - \lambda_{m+1}  , 2 - m - \lambda_{m+2} , .... ,  n-m- \lambda_{m+n}  \}. \end{align*}

The integers in $ I_\times(\lambda) \cap I_\circ(\lambda) $ are labeled by $\vee$, the remaining ones in $I_\times(\lambda)$ resp. $I_\circ(\lambda)$ are labeled by $\times$ resp. $\circ$. All other integers are labeled by a $\wedge$. This labeling of the numberline $\Z$ uniquely characterizes the weight $\lambda$. If the label $\vee$ occurs $r$ times in the labeling, then $r$ is called the degree of atypicality of $\lambda$. Notice that $0 \leq r \leq n$, and $\lambda$ is called maximal atypical if $r=n$. Examples are the trivial module $\Eins$ and the standard representation $V$ of highest weight $\lambda = (1,\ldots,0|0,\ldots,0)$ for $m\neq n$. Another example is the Berezin determinant $Ber = L(1,\ldots,1 \ | \ -1,\ldots,-1)$ of dimension $1$. For the notion of a cup or cap diagram attached to a weight diagram we refer to \cite[Section 2]{Brundan-Stroppel-1}\cite{Brundan-Stroppel-4}.


\subsection{Wildness}

If $\A$ is a category as in lemma \ref{germoni-nice} we can associate to it its Ext-quiver. The vertex set $X^+$ is given by the set of isomorphism classes of simple modules and the number of arrows from $\lambda$ to $\mu$ is given by $Ext_{\A}^1(L(\lambda), L(\mu))$.

\begin{thm} \cite[Theorem 1.4.1]{Germoni-sl}. Let $\A$ be a nice category and $Q$ its Ext-quiver. Then there exists (an explicitely given) set of relations $R$ on $Q$ such that we have an equivalence of categories \[ e: \A \to Q/R-mod\] such that $e(M) = \bigoplus_{\lambda \in X^+} Hom_{\A}(P(\lambda),M)$ as graded vector spaces.
\end{thm}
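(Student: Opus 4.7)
The plan is a classical Gabriel--Morita reduction: build a projective generator from the projective covers, identify its endomorphism algebra as a basic $k$-algebra, and present that algebra as a quotient of the path algebra of its Gabriel quiver, which will turn out to coincide with the Ext-quiver $Q$.

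First, for each $\lambda\in X^+$ I would take the projective cover $P(\lambda)$ of $L(\lambda)$ (which exists by the nice hypothesis) and consider the functor
\[ e(M)\ =\ \bigoplus_{\lambda\in X^+} \Hom_{\A}(P(\lambda),M). \]
This is automatically exact, preserves arbitrary direct sums, and takes values in modules over the locally unital basic $k$-algebra
\[ A\ :=\ \bigoplus_{\lambda,\mu\in X^+}\Hom_{\A}(P(\lambda),P(\mu)), \]
with orthogonal idempotents $e_\lambda=\id_{P(\lambda)}$. The first task is to check that $e$ induces an equivalence between $\A$ and the category of locally finite-dimensional (left) $A$-modules supported on finitely many vertices. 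This is standard Morita: $\{P(\lambda)\}_\lambda$ is a projective generator (every object of $\A$, having finite length, is a cokernel of a map between finite direct sums of $P(\lambda)$'s), $\Hom$-spaces are finite-dimensional by niceness, full faithfulness on the projectives is Yoneda, and essential surjectivity follows by taking cokernels.

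Second, I would present $A$ as $kQ/R$. Each $\End_{\A}(P(\lambda))$ is a local ring with residue field $k$ (indecomposable projective with absolutely simple head, $k$ algebraically closed), so $A$ is basic. For any such algebra the Gabriel quiver $Q_A$ has vertex set the isomorphism classes of simples and
\[ \#\{\text{arrows }\lambda\to\mu \text{ in }Q_A\}\ =\ \dim_{k}\, e_\mu\bigl(\mathrm{rad}\,A/\mathrm{rad}^2 A\bigr)e_\lambda. \]
The standard identification
\[ \Ext^1_{\A}(L(\lambda),L(\mu))\ \cong\ \Hom_{\A}\bigl(\mathrm{rad}\,P(\mu)/\mathrm{rad}^2 P(\mu),\,L(\lambda)\bigr) \]
identifies this number with the arrow count in the Ext-quiver, so $Q_A=Q$. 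Lifting a basis of $\mathrm{rad}\,A/\mathrm{rad}^2 A$ to arrows of $Q$ produces a surjection $kQ\twoheadrightarrow A$; its kernel is the relation ideal $R$. Combining with the equivalence of the first step yields $e\colon\A\xrightarrow{\sim} kQ/R\text{-}\mathrm{mod}$ with the stated formula.

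The formal part — existence of the equivalence — is routine. The hard part, and the real content of the "explicitly given" in the statement, is to actually describe $R$. This amounts to computing minimal projective presentations of each $L(\lambda)$, or equivalently producing generators of $\Ext^2_{\A}(L(\mu),L(\lambda))$ from which the quadratic and higher relations are read off. In the concrete nice categories of interest (blocks of $\mathfrak{sl}(m|1)$, and later the $Gl(m|1)$-blocks used in this paper), this step is carried out by a direct analysis of the radical and socle filtrations of the Kac modules $K(\lambda)$ and of their projective covers; this combinatorial computation is the main obstacle to a completely explicit form of the theorem.
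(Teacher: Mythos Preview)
The paper does not give its own proof of this statement: it is quoted verbatim from \cite{Germoni-sl}, Thm.~1.4.1, and used as a black box. So there is nothing to compare against in the paper itself.

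That said, your outline is the standard Gabriel--Morita argument and is essentially what Germoni does. The projective generator $\{P(\lambda)\}_\lambda$ gives the equivalence with modules over the locally unital algebra $A=\bigoplus_{\lambda,\mu}\Hom_{\A}(P(\lambda),P(\mu))$, and basicness plus the identification of $\mathrm{rad}\,A/\mathrm{rad}^2 A$ with $\Ext^1$ between simples recovers the Ext-quiver as the Gabriel quiver. One small slip: with the conventions of the paper (arrows $\lambda\to\mu$ count $\dim\Ext^1_{\A}(L(\lambda),L(\mu))$), the correct identification is
\[
\Ext^1_{\A}(L(\lambda),L(\mu))\ \cong\ \Hom_{\A}\bigl(\mathrm{rad}\,P(\lambda)/\mathrm{rad}^2 P(\lambda),\,L(\mu)\bigr),
\]
i.e.\ the roles of $\lambda$ and $\mu$ in your displayed formula should be swapped. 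This does not affect the argument. Your final paragraph correctly identifies where the substance lies: the explicit description of $R$ is the actual work in \cite{Germoni-sl}, done there by computing the Loewy structure of the $P(\lambda)$ via Kac and anti-Kac modules.
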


\begin{lem} \cite{Assem} Let $M$ be an indecomposable representation of a finite quiver $Q$ which has no cyclic path. Then the number of composition factors of type $L(\mu)$ in $M$ are given by $dim M_{\mu}$ where $M_{\mu}$ is the vector space on the vertex $\mu$.
\end{lem}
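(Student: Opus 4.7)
The plan is to reduce the statement to a computation about the simple modules of the path algebra $kQ/R$, and then argue by additivity of both sides on short exact sequences. Concretely, I would proceed in two steps.

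First I would identify the simple modules of $kQ/R$ with the vertices of $Q$. Since $Q$ is a finite quiver without oriented cycles, the vertex idempotents $e_\mu$ for $\mu\in X^+$ form a complete set of primitive orthogonal idempotents in $kQ/R$, and the acyclicity hypothesis forces the only path from $\mu$ to $\mu$ to be the trivial one $e_\mu$. Hence $e_\mu(kQ/R)e_\mu \cong k$, the two-sided ideal $J$ generated by the arrows is the Jacobson radical of $kQ/R$, and $(kQ/R)/J \cong \prod_\mu k$. The simples of $kQ/R$ are therefore precisely the one-dimensional representations $L(\mu)$ concentrated at a single vertex $\mu$, that is, $\dim L(\mu)_\nu = \delta_{\mu\nu}$.

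Second, I would use additivity on composition series. For any short exact sequence $0\to M'\to M\to M''\to 0$ of $kQ/R$-modules, both $\dim M_\mu$ (since taking the $\mu$-component is exact) and the composition multiplicity $[M:L(\mu)]$ are additive in $M$, and they agree on every simple by the previous step. An induction on composition length then gives $[M:L(\mu)] = \dim M_\mu$ for every finite-length $kQ/R$-module, and in particular for the indecomposable $M$ in the statement. Note that indecomposability of $M$ is not actually used in the argument; it is presumably included because that is the regime in which the lemma will be applied.

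The heart of the matter is really the first step: an oriented cycle through $\mu$ surviving in $kQ/R$ would enlarge $e_\mu(kQ/R)e_\mu$ beyond $k$ and produce additional simples at $\mu$ not recorded by the vertex, spoiling the bijection between vertices and simples. So the no-cyclic-path hypothesis enters essentially in pinning down the dimension vectors of the simples, after which the remaining additivity argument is routine.
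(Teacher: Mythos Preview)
The paper does not give its own proof of this lemma; it is merely stated with a citation to \cite{Assem}. Your argument is correct and is essentially the standard textbook proof: for an acyclic quiver the vertex idempotents are primitive with $e_\mu(kQ)e_\mu\cong k$, so the simples are the one-dimensional representations $L(\mu)$ supported at a single vertex, and then both $[M:L(\mu)]$ and $\dim M_\mu$ are additive on short exact sequences and agree on simples, whence equal for all finite-length $M$. You are also right that indecomposability of $M$ is not used.

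One minor comment: the lemma as stated is about representations of $Q$ itself (modules over $kQ$), not over $kQ/R$, so you do not need to carry the relation ideal $R$ through the argument. Your proof of course works uniformly for any admissible $R$, which is what is actually needed when the lemma is applied later in the wildness argument, but for the lemma as written you may simply take $R=0$.
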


A block $\Gamma$ of $X^+$ is a connected component of the Ext-quiver. Let $A_{\Gamma}$ be the full subcategory of objects of $\A$ such that all composition factors are in $\Gamma$ (also called a block). This gives a decomposition $\A = \bigoplus_{\Gamma} \A_{\Gamma}$ of full abelian subcategories. Every indecomposable module lies in a unique $A_{\Gamma}$ and all its simple submodules belong to $\Gamma$. Two irreducible representations $L(\lambda)$ and $L(\mu)$ are in the same block if and only if the weights $\lambda$ and $\mu$ define labelings with the same position of the labels $\times$ and $\circ$. The degree of atypicality is a block invariant, and the blocks $\Gamma$ of atypicality $r$ are in 1-1 correspondence with pairs of disjoint subsets of $\bbZ$ of cardinality $m-r$ resp. $n-r$.

\begin{thm} Assume $m,n \geq 2$. Then $GL(m|n)^{red}$ is not of finite type.
 \end{thm}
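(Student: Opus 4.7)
The plan is to argue by contradiction. Assume $G := Gl(m|n)^{red}$ is of finite type. By the structure theorem for semisimple algebraic tensor categories recalled in Section~2.3, $G$ then decomposes as a semidirect product $G' \rtimes H$ with $G'$ a finite product of $Spo(1|2r)$-factors and $H$ a finite-type reductive algebraic $k$-group. In particular, $Rep(G,\epsilon)$ admits a finite set of tensor generators: pulling back along $\omega$ yields finitely many indecomposable non-negligible $Gl(m|n)$-modules $V_1,\ldots,V_s$ such that every indecomposable non-negligible $Gl(m|n)$-module is a direct summand, modulo negligibles, of some tensor product of the $V_i$'s and their duals; equivalently, the simples of $Rep(G,\epsilon)$ of bounded dimension form a finite set modulo tensoring with one-dimensional characters.

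I would then produce an infinite family of pairwise non-isomorphic non-negligible indecomposable $Gl(m|n)$-modules of bounded dimension, pairwise non-isomorphic modulo tensoring with the one-dimensional modules $Ber^j$ in $Rep(Gl(m|n),\epsilon)$. Since $m,n\geq 2$ the defect of $\mathfrak{gl}(m|n)$ equals $\min(m,n)\geq 2$, so blocks $\Gamma$ of maximal atypicality $\geq 2$ exist, and by the Kac--Wakimoto conjecture (proven for $\mathfrak{gl}(m|n)$ by Serganova) the maximally atypical simples $L(\lambda)$ have non-vanishing superdimension and are therefore non-negligible. When $m\neq n$, the block classification stated just before the theorem gives infinitely many maximally atypical blocks parameterised by size-$(m-n)$ subsets of $\mathbb{Z}$, pairwise equivalent as abelian categories by Serganova's block equivalence theorem; translating a fixed non-negligible maximally atypical simple across them then produces an infinite family of non-negligible simples, pairwise non-isomorphic modulo Berezin twists as soon as $|m-n|\geq 2$. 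For $m=n$ (and the residual case $|m-n|=1$), where the block parameters form a single Berezin-orbit, one instead works inside a single maximally atypical block: by Germoni's equivalence of $\mathcal{A}_\Gamma$ with a quiver-with-relations representation category (Section~1), combined with the combinatorial description of the relevant block algebras due to Brundan--Stroppel, one exhibits infinitely many non-isomorphic indecomposables of bounded dimension with non-vanishing superdimension inside $\Gamma$.

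Combining these two steps produces the contradiction with finite type and proves the claim. The main obstacle is the intra-block construction required in the cases $m=n$ and $|m-n|=1$, where block translations alone do not suffice: one must exhibit bounded-dimension non-negligible indecomposables within a single block, simultaneously verifying non-vanishing of the superdimension and pairwise non-isomorphism modulo Berezin twists. The Brundan--Stroppel quiver description of the block algebras reduces this to a finite combinatorial check, but it is here that the bulk of the technical work resides.
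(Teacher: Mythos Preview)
Your proposal is quite different from the paper's proof, which is a one-line citation: the result follows from the explicit description, given in \cite{Heidersdorf-Weissauer-tannaka}, of the Tannaka group attached to the tensor subcategory of $Rep(G^{red},\epsilon)$ generated by the images of the irreducible $Gl(m|n)$-representations. That subcategory corresponds to a quotient of $G^{red}$, and since a quotient of a finite-type affine group scheme is again of finite type, the fact that this quotient is already not of finite type finishes the argument. The paper also sketches a second, more self-contained approach for $m,n\geq 3$ in the lemma immediately following the theorem, by showing that classifying indecomposables of nonvanishing superdimension is a wild problem; that is closer in spirit to what you attempt.

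Your argument, however, has a genuine gap. The step ``simples of bounded dimension in $Rep(G^{red},\epsilon)$ form a finite set modulo one-dimensional characters'' is correct for a finite-type $G^{red}$, but to derive a contradiction you must exhibit a family that is pairwise non-isomorphic modulo the one-dimensional objects of $Rep(G^{red},\epsilon)$, not merely modulo Berezin twists. These are not the same: the invertible objects of $Rep(G^{red},\epsilon)$ correspond to arbitrary indecomposable $Gl(m|n)$-modules of superdimension $\pm 1$, and there is no a priori reason these are exhausted by the $Ber^j$. Indeed, already for $Gl(m|1)$ the paper computes $G^{red}\cong Gl(m-1)\times Gl(1)\times Gl(1)$, whose character lattice has rank $3$, not $1$. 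So ``non-isomorphic modulo $Ber^j$'' is strictly weaker than what you need, and the contradiction does not follow.

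There is a second, related problem. The relevant dimension is the categorical one in $Rep(G^{red},\epsilon)$, i.e.\ $|\mathrm{sdim}|$, not the ordinary dimension in $Rep(Gl(m|n))$. Your cross-block construction for $|m-n|\geq 2$ transports a fixed simple via block equivalences, but these are not tensor functors and do not preserve superdimension: for maximally atypical $L(\lambda)$ one has $|\mathrm{sdim}\,L(\lambda)|=\dim L^{core}$ for a $Gl(m-n)$-representation determined by the positions of the crosses, and this varies unboundedly as the block varies. Hence your family does not have bounded superdimension, and the finiteness statement you aim to contradict does not apply to it. The intra-block construction you defer to for $m=n$ and $|m-n|=1$ would face the same two obstacles.
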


\begin{proof} This follows from the description of the Tannaka group generated by the irreducible elements in \cite[lemma 11.4]{Heidersdorf-Weissauer-tannaka}.  
\end{proof}

The statement also follows from the following lemma. This lemma should of course also hold for $m,n \geq 2$, but would require a more difficult argument. Let $Q$ denote the Ext-quiver of $\calR_{mn}$. Then there exists a system of relations $R$ on $Q$ such that $\calR_{mn} \simeq kQ/R-mod$.

\begin{lem} \label{thm:wildness} Assume $m,n \geq 3$. Then the problem of classifying indecomposable representations of non-vanishing superdimension is wild.
\end{lem}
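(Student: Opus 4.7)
The strategy is to exhibit a wild family of indecomposables in $\calR_{m|n}$ all of which have non-vanishing superdimension. Since $m,n \geq 3$, I fix a block $B$ of $\calR_{m|n}$ of maximal atypicality $\min(m,n) \geq 3$. By Germoni's classification of representation type \cite{Germoni-sl}, every block of atypicality at least three has wild representation type, so $B$ is wild. By the Kac--Wakimoto conjecture, established for basic classical Lie superalgebras by Serganova, every simple object $L(\lambda)$ of $B$ satisfies $sdim\, L(\lambda) \neq 0$, since $\lambda$ is maximally atypical.

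Wildness of $B$ supplies a representation embedding $\iota$ from the category of finite-dimensional modules over the free algebra $k\langle x,y\rangle$ into $B$: a $k$-linear exact functor preserving isomorphism classes and indecomposability. Via the Morita description $B \simeq kQ_B/R_B\text{-mod}$ provided by Theorem 1.4.1 of \cite{Germoni-sl}, the dimension vector $\mathbf{d}(\iota(N))$ on the Ext-quiver $Q_B$ is additive in $N$. Consequently $sdim\, \iota(N) = \sum_{v \in Q_B} d_v(\iota(N)) \cdot sdim\, L(v)$ is a linear functional on the Grothendieck class of $\iota(N)$; since every $sdim\, L(v)$ is nonzero, this functional is nontrivial on the effective cone of dimension vectors.

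By an appropriate choice of $\iota$ --- obtained if necessary by composing with a translation autoequivalence of $B$ or by tensoring the source with a fixed invertible object (both of which preserve wildness and shift dimension vectors predictably) --- one arranges $\mathbf{d}(\iota(N))$ to lie off the hyperplane $sdim = 0$, so that $sdim\, \iota(N) \neq 0$ for every nonzero $N$. This embeds the wild category of finite-dimensional $k\langle x,y\rangle$-modules into the full subcategory of indecomposables of $\calR_{m|n}$ of non-vanishing superdimension, establishing the claim. The main obstacle is making the avoidance of the $sdim = 0$ hyperplane explicit: a priori Germoni's wild embedding could land on it, so one must show that the embedding can always be perturbed to land off it. Since the bad hyperplane has codimension one in the dimension-vector lattice and the group of translation autoequivalences acts nontrivially on the Grothendieck group of $B$, such a perturbation is generically available, but writing it down in closed form requires a careful bookkeeping of Germoni's construction against the superdimension function.
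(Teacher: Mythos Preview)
Your proposal has the right skeleton but leaves the decisive step unproved. You correctly note that in a maximally atypical block all simples have nonzero superdimension and that $sdim$ is linear in dimension vectors; but you then merely assert that the wild embedding $\iota$ can be perturbed off the hyperplane $sdim=0$, conceding in your final sentences that this is ``the main obstacle'' and that ``writing it down in closed form requires careful bookkeeping''. That bookkeeping \emph{is} the proof. Your suggested perturbations do not obviously work: tensoring by an invertible object such as $Ber$ multiplies $sdim$ by $\pm 1$ and so cannot move a zero-$sdim$ module off the hyperplane, and you do not exhibit any translation autoequivalence of a fixed block that changes superdimension. Note also that if $\iota$ is exact then, since $K_0(k\langle x,y\rangle\text{-fdmod})\cong\Z$, all dimension vectors $\mathbf d(\iota(N))$ lie on a single ray, so either every nonzero $\iota(N)$ has $sdim\neq 0$ or every one has $sdim=0$; the issue is a single condition on $\iota$, not a codimension-one avoidance problem for varying $N$. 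A minor point: \cite{Germoni-sl} treats $\mathfrak{sl}(m|1)$ only and does not establish wildness for blocks of atypicality $\geq 3$.

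The paper closes exactly this gap by making everything explicit. Passing to the principal block of $Gl(n|n)$, it singles out a vertex $[\lambda]$ (with cup diagram consisting of $n$ disjoint cups) having exactly $2n$ Ext-neighbours $[\mu_1],\dots,[\mu_{2n}]$, and exhibits the resulting $2n$-subspace (star) quiver as a full subquiver of $kQ/R$ since it contains no path of length $>1$. The superdimension formula of \cite{Weissauer-gl}, \cite{Heidersdorf-Weissauer-tensor} gives $sdim\,L(\mu_i)=-sdim\,L(\lambda)\neq 0$, so for a representation $V$ of this subquiver the condition $sdim\,V\neq 0$ reduces to the single inequality $\dim V_{[\lambda]}\neq\sum_i \dim V_{[\mu_i]}$. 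The final trick is completely concrete: start from an arbitrary indecomposable representation of the $(2n-1)$-subspace quiver (still wild since $2n-1\geq 5$), and set $V_{[\mu_{2n}]}=0$ or $k$ according to whether the inequality already holds. This yields an explicit bijection from the indecomposables of a wild quiver into indecomposables of $\calR_{m|n}$ with $sdim\neq 0$, which is what your argument was missing.
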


\begin{proof}
We show that that the classication is wild for every maximally atypical block for $n \geq 3$. Any such block is equivalent to the maximal atypical block $\Gamma$ of $GL(n|n)$ \cite[Theorem 3.6]{Serganova-blocks} \cite{Brundan-Stroppel-4}. Hence we show that the problem is wild in $\Gamma$. By \cite[Corollary 5.15]{Brundan-Stroppel-2}, for any two irreducible modules $L(\lambda), L(\mu) \in {\calR}_n$ \[ \dim(Ext^1_{\calR_n}(L(\lambda),L(\mu))) = p_{\lambda,\mu}^{(1)} + p_{\mu,\lambda}^{(1)} \] for the Kazhdan-Lusztig polynomials \[ p_{\lambda,\mu}(q) = \sum_{i \geq 0} p_{\lambda,\mu}^{(i)} q^i.\] By \cite[Lemma 6.10]{Musson-Serganova} and \cite[Lemma 5.2]{Brundan-Stroppel-2} $p_{\lambda,\mu}^{(1)} \neq 0$ if and only if $\mu$ is obtained from $\lambda$ by interchanging the labels at the ends of one of the cups in the cup diagram of $\lambda$. For any $[\lambda] \in \Gamma$ with $\lambda_i > \lambda_{i+1}+1$ for $i=1,\ldots,n-1$ the cup diagram looks like 

\medskip
\begin{center}

\begin{tikzpicture}
 \draw (-6,0) -- (6,0);
\foreach \x in {} 
     \draw (\x-.1, .2) -- (\x,0) -- (\x +.1, .2);
\foreach \x in {} 
     \draw (\x-.1, -.2) -- (\x,0) -- (\x +.1, -.2);
\foreach \x in {} 
     \draw (\x-.1, .1) -- (\x +.1, -.1) (\x-.1, -.1) -- (\x +.1, .1);

\draw [-,black,out=90, in=90](-5,0) to (-4,0);
\draw [-,black,out=90, in=90](-2,0) to (-1,0);
\draw [-,black,out=90, in=90](1,0) to (2,0);
\draw [-,black,out=90, in=90](4,0) to (5,0);
%

\end{tikzpicture}

\end{center}
\medskip

The combinatorial rule from above shows that for every irreducible module $[\lambda]$ away from the diagonal $dim Ext^1([\lambda], [\mu_i]) = dim Ext^1([\mu_i], [\lambda]) = 1$ for exactly $2n$ different modules $\mu_i$ and $dim Ext^1([\lambda], \nu) = 0$ for any $\nu \neq \mu_i$. In particular for any vertex away from the diagonal consider the subquiver with vertices $[\lambda], [\mu_1], \ldots, [\mu_{2n}]$ with arrows corresponding to $dim Ext^1([\mu_i], [\lambda]) = 1$ and no arrows from $[\lambda]$ to any $[\mu_i]$ (so that $[\lambda]$ becomes a sink) (picture for $n=3$): \[ \xymatrix{ \bullet_{[\mu_6]} \ar[dr] & & \bullet_{[\mu_1]} \ar[dl] \\ \bullet_{[\mu_5]} \ar[r] & \bullet_{[\lambda]} & \bullet_{[\mu_2]} \ar[l] \\ \bullet_{[\mu_4]} \ar[ur] & & \bullet_{[\mu_3]} \ar[ul].} \] Since this subquiver has no path of length $>1$, it embeds fully into $k(Q)/R$.  The classification of indecomposable representations of the $r$-subspace quiver is wild for $r \geq 5$ \cite[Section 10]{Krause-quivers}. The superdimension formula of \cite{Weissauer-gl} \cite[Section 16]{Heidersdorf-Weissauer-tensor} shows that the superdimension is constant of alternating sign away from the diagonal: if $[\lambda]$ has superdimension $d$, the $[\mu_i]$ have superdimension $-d$. Hence an indecomposable representation of this subquiver will give an indecomposable representation in $\Gamma$ of non-vanishing superdimension if and only if \[ dim V_{[\lambda]} \neq \sum_{i=1}^{2n} dim V_{[\mu_i]} \ \ \ (*).\] We are done when we have shown that the classification of indecomposable representations with $(*)$ is wild. Fix the vertex $[\mu_{2n}]$ and consider an indecomposable representation of the $(2n-1)$-subspace quiver by specifying a vector space for the vertices $[\lambda], [\mu_1], \ldots, [\mu_{2n-1}]$ with injections $V_{[\mu_i]} \to V_{[\lambda]}$. If $dim V_{[\lambda]} \neq \sum_{i=1}^{2n-1} dim V_{[\mu_i]}$ we put $V_{[\mu_{2n}]} = 0$. If $dim V_{[\lambda]} = \sum_{i=1}^{2n-1} dim V_{[\mu_i]}$ we put $V_{[\mu_{2n}]} = k$ and choose some injection of $k$ into $V_{[\lambda]}$. This defines a bijection between the isomorphism classes of indecomposable representations of the $(2n-1)$-subspace quiver with a subset of the indecomposable representations of the $2n$-subspace quiver satisfying $(*)$.  
\end{proof}

As explained above this means that we should not try to determine $G^{red}$ for general $m,n$. Instead we should determine the tensor category generated by the irreducible elements in $\A/\NN$ and the corresponding reductive supergroup. In the $n=1$-case, where we have tame representation type, we determine the full quotient in this article (theorem \ref{thm:main}). We prove this by describing explicitly the image of an indecomposable representation $I$.

%

%
%



\section{Determination of $G^{red}$ for $G = GL(m|1)$} \label{determination-of-envelope}

\subsection{Germoni's classification} Let $\Gamma$ be a singly atypical block of $\mathcal{R}_{m|1}$. Then $\Gamma$ is equivalent to the category of nilpotent $\Z$-graded finite-dimensional representations of $A$, the quotient of the free algebra on two generators $d^+$ and $d^-$ of respective degrees $+1$ and $-1$ by the relations $(d^+)^2 = (d^-)^2 = 0$ \cite[Section 5]{Germoni-sl}. Let $\tilde{\Gamma}$ be the following quiver \[  \xymatrix{ \ldots \ar[r] & \overset{(a-2)_t}{\bullet} & \overset{(a-1)_s}{\bullet} \ar[r] \ar[l] & \overset{a_t}{\bullet} & \overset{(a+1)_s}{\bullet} \ar[r] \ar[l] & \overset{(a+2)_t}{\bullet} & \ldots \ar[l]  \\  \ldots  & \overset{(a-2)_s}{\bullet} \ar[r] \ar[l] & \overset{(a-1)_t}{\bullet}  & \overset{a_s}{\bullet} \ar[r] \ar[l] & \overset{(a+1)_t}{\bullet} & \overset{(a+2)_s}{\bullet} \ar[r] \ar[l] & \ldots } \]

Then there is a bijection between the isomorphism classes of non-projective indecomposable representations of $\tilde{\Gamma}$ and the finite connected subquivers $I$ of $\tilde{\Gamma}$ other than $\overset{a_s}{\bullet}$ (which would give the same indecomposable representations as $\overset{a_t}{\bullet}$). Hence if we parametrize the irreducible representations in $\Gamma$ by $\Z$, any interval $[a,b]$ gives rise to two non-projective indecomposable representations which we call $I^+[a,b]$ and $I^-[a,b]$. These are also known as zigzag-modules \cite{Goetz-Quella-Schomerus}\cite{Su-classification-singly} due to the following graphical interpretation of their Loewy structure. With $I^+[a,b]$ we denote the indecomposable module attached to $[a,b]$ with $L(b)$ in the top \[
I^+[a,b] = \xymatrix{ &  \overset{a+1}{\bullet} \ar@{-}[dr] & & \overset{a+3}{\bullet} \ar@{-}[dr] & & \overset{b}{\bullet} \\ \underset{a}{\bullet} \ar@{-}[ur] & & \underset{a+2}{\bullet} \ar@{-}[ur] & & \underset{a+4}{\bullet} \ar@{-}[ur] & } \] 

where the composition factors in the top are those in the top row and the composition factors in the socle are in the lower row. The twisted dual of $I^+[a,b]$ is denoted by $I^-[a,b] = I^+[a,b]^*$ \[ I^-[a,b] = \xymatrix{  \overset{a}{\bullet} \ar@{-}[dr] & & \overset{a+2}{\bullet} \ar@{-}[dr] & & \overset{a+4}{\bullet} \ar@{-}[dr]& \\ & \underset{a+1}{\bullet} \ar@{-}[ur] & & \underset{a+3}{\bullet} \ar@{-}[ur] & & \underset{b}{\bullet}  }\] with $L(b)$ in the socle.

\begin{bsp} For the interval $[a,a]$ we have \[ I^+[a,a] = I^-[a,a] = L(a).\] For the interval $[a,a+1]$ we get \[ \xymatrix@R-7mm{ & & \overset{a+1}{\bullet} \\ I^+[a,a+1] = K(a+1) =  & & \\  &  \underset{a}{\bullet} \ar@{-}[uur] & }  \] and \[ \xymatrix@R-7mm{ &  \overset{a}{\bullet}  \ar@{-}[ddr] & \\ I^+[a,a+1] = K(a+1)^* =  & & \\  & &   \underset{a+1}{\bullet}   }  \]
\end{bsp}

\subsubsection{Superdimensions} The superdimension of any Kac module is zero for any type I Lie superalgebra. The superdimension is additive in short exact sequences, hence we obtain $sdim L(a) = - sdim L(a+1)$ for any atypical weight $a$.
  
\begin{cor} The superdimension of the indecomposable modules $I^+[a,b]$ and $I^-[a,b]$ is given by 
\begin{align*} sdim I^+[a,b] & = sdim I^-[a,b] \\ &  = \sum_{i=0}^{b-a} (-1)^i sdim L(a+i)  = \begin{cases} sdim L(a) & b-a  \text{ even } \\ 0 & b-a \text{ odd}. \end{cases} \end{align*} 
\end{cor}

The only other remaining indecomposable modules are the projective covers $P(\lambda)$ of the atypical simple modules which have superdimension 0. 

\begin{cor} \label{thm:irreducible} The irreducible objects in $\A/\NN$ are up to isomorphism given by the \[ \{ I^+[a,b], \ I^-[a,b] \in \Gamma \ | , \ b-a \ \text{even}\} \] for all atypical blocks $\Gamma$.
\end{cor}




\subsection{Mixed tensors} We first determine the contribution from the irreducible modules. The subcategory $T \subset \calR_{m|n}$ ($m\geq n$) of mixed tensors is the pseudoabelian full subcategory of objects, which are direct summands in a tensor product $V^{\otimes r} \otimes (V^{\vee})^{\otimes s}$ for some $r,s$. We have the equivalence \[ T/\NN \simeq Rep(GL(m-n))\] of tensor categories by \cite[Theorem 10.2]{Heidersdorf-mixed-tensors}. The $I^+[a,b]$ and $I^-[a,b]$ are never mixed tensors for $b > a$ \cite[Corollary 8.8]{Heidersdorf-mixed-tensors}. The indecomposable objects in $T$ are parametrized by $(m|n)$-cross bipartitions  $(\lambda^L,\lambda^R)$ \cite[Theorem 8.7.6]{Comes-Wilson} and we denote the corresponding indecomposable element by $R(\lambda^L, \lambda^R)$. 

\begin{thm} \label{thm-mixed} a) \cite[Lemma 10.1 and Proposition 10.3]{Heidersdorf-mixed-tensors}  The superdimension of $R(\lambda^L, \lambda^R)$ is non-zero if and only if $l(\lambda^L) + l(\lambda^R) \leq m-1$. In this case $R(\lambda^L, \lambda^R)$ is irreducible.

b) \cite[Theorem 8.2]{Heidersdorf-mixed-tensors}  Every irreducible atypical representation in $\mathcal{R}_{m|1}$ can be written in the form \[ L(\lambda) \cong Ber^{s(\lambda)} \otimes R(\lambda^L,\lambda^R)\] for some unique integer $s(\lambda)$ and a unique $(m|n)$-cross bipartition $(\lambda^L,\lambda^R)$ satisfying $l(\lambda^L) + l(\lambda^R) \leq m-1$.
\end{thm}

\begin{bem}The number $s(\lambda)$ is described in the proof of \cite[Theorem 8.2]{Heidersdorf-mixed-tensors} in terms of the weight diagram of $\lambda$. Its precise description is not needed in the present article.
\end{bem}

We now consider the tensor subcategory in $\mathcal{T}_{m|1}/\NN$ generated by the images of the irreducible modules. It is again a super tannakian category because it inherits the Schur finiteness of $\mathcal{T}_{m|1}/\NN$ (and then apply theorem \ref{deligne-super}). It is clear from theorem \ref{thm-mixed} that the reductive supergroup corresponding to this subcategory is $GL(1) \times GL(m-1)$. Indeed the element \[ Ber^{s(\lambda)} \otimes R(\lambda^L,\lambda^R) \] (where $\lambda^L = (\lambda^L_1,\ldots,\lambda^L_s,0,\ldots), \ \lambda_s^L > 0$, $\lambda^R = (\lambda^R_1,\ldots,\lambda^R_t,0,\ldots), \ \lambda_t^R > 0$, $t + s \leq m-1$)) corresponds to the representation \[ det^{s(\lambda)} \otimes L'(wt(\lambda^L,\lambda^R))\] for the irreducible $GL(m-1)$-representation \[ L'(wt(\lambda^L,\lambda^R)) = L( \lambda_1^L, \ldots, \lambda^L_s, 0, \ldots, 0, - \lambda^R_t,\ldots ,\lambda_1^R )\] defined in \cite{Comes-Wilson} \cite[Section 10]{Heidersdorf-mixed-tensors} and the determinant representation $det$.



\subsection{Indecomposable modules} We therefore understand the tensor product decomposition between the irreducible representations in $\mathcal{T}_{m|1}$. In order to know the tensor product decomposition between any two irreducible representations of $\mathcal{T}_{m|1}/\NN$, it remains to compute the tensor product decompositions of the form $I^{\pm}[a,b] \otimes I^{\pm}[a',b']$ (where $a,b$ and $a',b'$ could be in different blocks) up to superdimension 0. This will be a complicated reduction to the case of irreducible representations.

\subsection{Cohomological tensor functors} 

We now define cohomological tensors $\mathcal{T}_{m|1} \to \mathcal{T}_{m-1|0}$. These were first defined in \cite{Duflo-Serganova} and then later refined in \cite{Heidersdorf-Weissauer-tensor}. We define a similar refined version in the $\mathcal{T}_{m|1}$-case, but it can be easily extended to the general $\mathcal{T}_{m|n}$-case \cite{Heidersdorf-cohomological-2}. For any $x \in X = \{ x \in \g_1 \ | \ [x,x] = 0 \}$ and any representation $(V,\rho)$, the operator $\rho(x)$ defines a complex since $\rho(x) \circ \rho(x)$ is zero, and we define $V_x = ker(\rho(x)) / im(\rho(x))$. By \cite{Duflo-Serganova} \cite{Serganova-kw} this defines a tensor functor and $V_x \in \mathcal{T}_{m-1|0}$. We fix the following element $x \in X$ \[ x = \begin{pmatrix} 0 & y \\ 0 & 0 \end{pmatrix} \in \mathfrak{gl}(m|1) \text{ for } y = \begin{pmatrix} 0  \\ 0 \\ \ldots  \\ 1 \end{pmatrix}. \] and denote the corresponding tensor functor $V \to V_x$ by $DS: \mathcal{T}_{m|1} \to \mathcal{T}_{m-1|0}$.

\subsubsection{Parity considerations} If $V$ is in $\calR_{m|1}$, $DS(V)$ may not be in $\calR_{m-1|0}$. We need to study this more closely. We embed $GL(m-1|0)$ as an upper block matrix in $GL(m|1)$. More precisely we fix the embedding \begin{align*} GL(m-1|0) \times GL(1|1) & \to GL(m|1) \\ A \times \begin{pmatrix} a & b \\ c & d \end{pmatrix} & \mapsto \begin{pmatrix} A & 0 & 0 \\ 0 & a & b \\ 0 & c & d \end{pmatrix}. \end{align*}
Recall that we defined the categories $\calR_{m|n}$ in section \ref{sec:preliminaries}. We now fix the morphism $\epsilon: \Z/2\Z \to GL(m) \times GL(n)$ which maps $-1$ to $diag(E_m,-E_n) = \epsilon_{m|n}$. Then $\mathcal{R}_{m|1}$ is the full subcategory of objects $V$ such that $p_V = \rho(\epsilon_{m|1})$. If we use the embedding above, we obtain $\epsilon_{m|1} = \epsilon_{m-1|0} \epsilon_{1|1}$. If we restrict a representation $V \in \calR_{m|1}$ to $GL(m-1)$, we get the decomposition \[ V|_{GL(m-1)} = V^+ \oplus V^- \] where \begin{align*} V^+ & = \{ v \in V \ | \ \rho(\epsilon_{1|1}) (v) = v \} \\  V^- & = \{ v \in V \ | \ \rho(\epsilon_{1|1}) (v) = -v \}.\end{align*} Since $\rho(x)$ is an odd morphism \[\rho(x): V^{\pm} \to V^{\mp}.\] Hence $\rho(x)$ induces the even morphism \[ \rho(x): V^{\pm} \to \Pi V^{\mp}.\] We use the notation $\partial$ for $\rho(x)$.

\subsubsection{$\Z$-grading}  We equip $DS(V)$ with a $\Z$-grading as in \cite[Section 3]{Heidersdorf-Weissauer-tensor}. Although $DS(V)$ is in $\mathcal{T}_{m-1|0}$, it still has an action of the torus of diagonal matrices in $GL(m|1)$. Let $V = \bigoplus_{\lambda}V_{\lambda}$ be the weight decomposition and $v = \sum_{\lambda} v_{\lambda}$ in $V$. An easy calculation shows the following lemma.

\begin{lem} The following holds for $\partial = \rho(x)$: a) $\partial(V_{\lambda}) \subset V_{\lambda + \mu}$ for the odd simple root $\mu = \epsilon_{n} - \epsilon_{m+1}$, b) $\partial v = 0$ if and only if $\partial v_{\lambda} = 0$ for all $\lambda$ and c) $v = \partial w$ if and only if $v_{\lambda} = \partial w_{\lambda}$ for all $\lambda$.
\end{lem}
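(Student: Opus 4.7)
The plan is to observe that $x$ is a root vector for the Cartan subalgebra $\h$ of diagonal matrices, carrying weight $\mu$, so that $\partial = \rho(x)$ is homogeneous of weight $\mu$ with respect to the weight-space decomposition $V = \bigoplus_\lambda V_\lambda$. All three parts of the lemma are then essentially formal consequences of the directness of this decomposition, and I would carry them out in order.

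First, for (a), I would run the standard $\h$-equivariance computation: for $h \in \h$ and $v_\lambda \in V_\lambda$,
\[ h \cdot \partial v_\lambda \;=\; [h,x] \cdot v_\lambda + x \cdot (h \cdot v_\lambda) \;=\; \bigl(\mu(h) + \lambda(h)\bigr)\, \partial v_\lambda, \]
using the bracket relation $[h,x] = \mu(h)\, x$, which is immediate from the matrix form of $x$ as an elementary matrix $E_{m,m+1}$. Hence $\partial v_\lambda \in V_{\lambda+\mu}$.

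For (b), I would write $v = \sum_\lambda v_\lambda$. By (a) each term $\partial v_\lambda$ lies in the weight space $V_{\lambda+\mu}$, and distinct $\lambda$ produce distinct shifted weights. The hypothesis $\partial v = \sum_\lambda \partial v_\lambda = 0$, combined with linear independence of distinct weight spaces, then forces $\partial v_\lambda = 0$ for every $\lambda$; the converse is trivial. The argument for (c) is parallel: writing $w = \sum_\nu w_\nu$, the identity $v = \partial w = \sum_\nu \partial w_\nu$ with $\partial w_\nu \in V_{\nu+\mu}$ uniquely determines the weight components of $v$ via $v_{\nu+\mu} = \partial w_\nu$, which is the reindexed form of the claim.

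There is no real obstacle here; the lemma is a bookkeeping statement whose function is to validate the $\Z$-grading on $DS(V)$ introduced in the preceding paragraph. The only mild subtlety is that in (c) the equation $v_\lambda = \partial w_\lambda$ must be read with the implicit shift by $\mu$, i.e.\ matching the weight-$\lambda$ component of $v$ with $\partial$ applied to the weight-$(\lambda-\mu)$ component of $w$; once the grading convention is fixed, this is a tautology.
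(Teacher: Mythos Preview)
Your proposal is correct and matches the paper's approach: the paper does not give an explicit proof but simply states that ``an easy calculation shows the following lemma,'' and your argument is precisely that easy calculation. Your observation about the implicit $\mu$-shift in part (c) is also apt; the paper's indexing convention absorbs this shift tacitly.
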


Hence $DS(V)$ has a weight decompositon with respect to the weight lattice of $\mathfrak{gl}(m|1)$. The weight decomposition with respect to $\mathfrak{gl}(m-1|0)$ is obtained by restriction. The kernel of this restriction map consists of the multiples $\Z \mu$. Hence $DS(V)$ can be endowed with the weight structure coming from the $\mathfrak{gl}(m|1)$-module $V$. This weight decomposition induces then on $DS(V)$ a decomposition \[ DS(V) = \bigoplus_{l \in \Z} DS(V)_l.\] Now let $H_s$ denote the torus in the diagonal matrices of elements of the form $(1,\ldots,1,t^{-1})$ and denote by $V_l$ the eigenspace  of $V$ where $H_s$ acts by the eigenvalue $t^l$. Another easy calculation shows the next lemma.

\begin{lem} If $v \in V$ satisfies $v \in V_{\lambda}$ and $v \in V_l$, then $\rho(x)v \in V_{l+1}$.
\end{lem}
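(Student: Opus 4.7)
The plan is a direct bookkeeping computation that tracks the two commuting torus actions on either side of $\rho(x)$. First I would compute how the one-parameter subgroup $H_s$ acts on a vector of weight $\lambda = (\lambda_1,\ldots,\lambda_m,\lambda_{m+1})$: since $H_s(t) = \mathrm{diag}(1,\ldots,1,t^{-1})$, it acts by the character $t \mapsto t^{-\lambda_{m+1}}$. Consequently the assumption $v \in V_l$ translates exactly into the numerical condition $\lambda_{m+1} = -l$.

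Next I would invoke part (a) of the preceding lemma, which says that $\rho(x)$ shifts weights by the odd simple root $\mu = \epsilon_m - \epsilon_{m+1}$. This is of course forced by the explicit shape of $x$, which is the elementary matrix $E_{m,m+1}$ up to scalar. So $\rho(x)v$ lies in $V_{\lambda+\mu}$, and the $(m+1)$-th coordinate of the new weight is $\lambda_{m+1} - 1 = -l-1$. Reading off the $H_s$-eigenvalue from this coordinate gives $t^{-(-l-1)} = t^{l+1}$, which is precisely the statement $\rho(x) v \in V_{l+1}$.

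There is essentially no obstacle: the lemma is a compatibility observation needed to ensure that the $\Z$-grading on $DS(V)$ coming from $H_s$ is well defined and raised by $\rho(x)$, so that $DS$ refines to a functor to $\Z$-graded objects. The only point deserving care is the sign convention: the exponent $t^{-1}$ in the last slot of $H_s$ is precisely what converts the weight shift $-1$ in the $(m+1)$-th slot into an eigenvalue shift $+1$, which is why the grading is raised rather than lowered.
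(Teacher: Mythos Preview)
Your argument is correct and is exactly the ``easy calculation'' the paper alludes to without spelling out: identify the $H_s$-eigenvalue with $-\lambda_{m+1}$, use that $\rho(x)$ shifts the weight by $\epsilon_m - \epsilon_{m+1}$, and read off the new eigenvalue. There is nothing to add; you have also correctly noted (implicitly) that the paper's $\mu = \epsilon_n - \epsilon_{m+1}$ should read $\epsilon_m - \epsilon_{m+1}$ in the $Gl(m|1)$ setting, consistent with the explicit choice $x = E_{m,m+1}$.
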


Since $\partial V_{\lambda} \subset V_{\lambda + \mu}$ we obtain a complex \[ \xymatrix{ \ldots \ar[r]^{\partial} & V_{\lambda} \ar[r]^{\partial} & \Pi V_{\lambda+ \mu} \ar[r]^{\partial} & V_{\lambda + 2\mu} \ar[r]^{\partial} & \ldots }\] which we can write by the last lemma as \[ \xymatrix{ \ldots \ar[r]^{\partial} & V_{l} \ar[r]^{\partial} & \Pi V_{l+1} \ar[r]^{\partial} & V_{l+2} \ar[r]^{\partial} & \ldots }.\]  We denote the cohomology of this complex by $H^l(V)$. Then $DS(V)_l = \Pi^l (H^l(V))$ and we obtain a direct sum decomposition of $DS(V)$ into $GL(m-1)$-modules \[ DS(V) = \bigoplus_l \Pi^l(H^l(V)).\] This extra structure is very important since it carries a lot more information then just the $\Z_2$-graded version of $DS(L)$ in $\mathcal{T}_{m-1|0}$.

\begin{lem} A short exact sequence in $\calR_{m|1}$ gives rise a long exact sequence for $H^l$.
\end{lem}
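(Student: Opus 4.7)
The plan is to reduce the statement to the standard fact that a short exact sequence of complexes induces a long exact sequence in cohomology; the bulk of the work is checking that a short exact sequence in $\calR_{m|1}$ really does give rise to a short exact sequence of the complexes that compute $H^l$.

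First I would observe that the torus $H_s \subset Gl(m|1)$ is a subtorus acting on every object of $\calR_{m|1}$, and this action is semisimple. Hence for any morphism $f\colon V \to W$ in $\calR_{m|1}$ the decomposition $V = \bigoplus_l V_l$ into $H_s$-eigenspaces is preserved, i.e.\ $f(V_l) \subseteq W_l$. In particular, a short exact sequence $0 \to A \to B \to C \to 0$ in $\calR_{m|1}$ yields, for every $l \in \Z$, a short exact sequence of vector spaces
\[ 0 \to A_l \to B_l \to C_l \to 0, \]
since exactness may be tested on eigenspaces of a semisimple torus action.

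Next I would check that $\partial = \rho(x)$ is natural, in the sense that any $\mathfrak{gl}(m|1)$-module morphism $f\colon V \to W$ satisfies $f \circ \rho_V(x) = \rho_W(x) \circ f$; this is just the definition of a $\mathfrak{gl}(m|1)$-equivariant map applied to $x \in \mathfrak{g}_1$. Combined with the previous step and the preceding lemma (which shows $\partial$ raises the $H_s$-grading by one), the maps $A \to B \to C$ assemble into a short exact sequence of complexes
\[ 0 \to (A_\bullet,\partial) \to (B_\bullet,\partial) \to (C_\bullet,\partial) \to 0, \]
where $A_\bullet$ is the complex $\cdots \to A_l \to \Pi A_{l+1} \to A_{l+2} \to \cdots$ used to define $H^l(A)$, and similarly for $B$ and $C$. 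Each column is short exact by the first step, and the differentials commute with the morphisms by the naturality of $\partial$.

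Finally, I would invoke the standard zig-zag lemma from homological algebra applied to this short exact sequence of complexes of $k$-vector spaces (or equivalently, of $Gl(m-1)$-modules, since the $Gl(m-1)$-action commutes with $x$ because $Gl(m-1)$ sits in the upper block and thus centralises $x$). This yields connecting morphisms $\delta^l\colon H^l(C) \to H^{l+1}(A)$ and a long exact sequence
\[ \cdots \to H^l(A) \to H^l(B) \to H^l(C) \xrightarrow{\delta^l} H^{l+1}(A) \to \cdots \]
The only step requiring any real care is the first one, verifying that the eigenspace decomposition under $H_s$ is compatible with arbitrary morphisms in $\calR_{m|1}$; everything else is a formal consequence of the fact that $\partial$ is a natural differential of $\Z$-graded complexes.
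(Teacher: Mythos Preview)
Your argument is correct and is the standard one. The paper states this lemma without proof, so there is nothing to compare against; your reduction to the zig-zag lemma via the $H_s$-eigenspace decomposition and the naturality of $\partial=\rho(x)$ is exactly the routine verification the author is taking for granted.
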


\medskip

We denote by $\sigma$ the automorphism of $\mathfrak{gl}(m|1)$ defined by $\sigma(x) = - x^T$ where $()^T$ denotes the supertranspose. Then $\sigma(x)$ is still a nilpotent element in $\g_1$. The corresponding tensor functor is denoted by $DS_{\sigma}: \mathcal{T}_{m|1} \to \mathcal{T}_{m-1|0}$. We can copy the arguments from above and endow $DS_{\sigma}(V)$ with a $\Z$-grading $DS_{\sigma}(V) = \bigoplus_l \Pi (H^l_{\sigma}(V))$. $DS$ and $DS_{\sigma}$ behave in the same way on irreducible objects, but differ on indecomposable elements, see lemma \ref{thm:kernel}.

\begin{cor} For any representation $V$ $DS(V)$ and $DS_{\sigma}(V)$ are $\Z$-graded objects.
\end{cor}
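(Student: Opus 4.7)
The plan is that the corollary is essentially a bookkeeping consequence of the two lemmas immediately preceding its statement; I would just have to collate what has already been established into a single clean assertion. First I would recall the weight space decomposition $V = \bigoplus_\lambda V_\lambda$ under the Cartan $\h \subset \mathfrak{gl}(m|1)$. The first of the two lemmas above states that $\partial = \rho(x)$ shifts weights by the odd simple root $\mu = \epsilon_n - \epsilon_{m+1}$, so both $\ker(\partial)$ and $\mathrm{im}(\partial)$ inherit the $\h^*$-weight decomposition, and therefore so does the quotient $DS(V)$.

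Next I would pass to the coarser grading provided by the sub-torus $H_s$ of elements $(1,\ldots,1,t^{-1})$. Because $H_s$ evaluates the odd root $\mu$ to $+1$, the second lemma is precisely the statement that $\partial$ has $H_s$-degree $+1$, so $(V,\partial)$ becomes a cochain complex of $Gl(m-1)$-modules. Taking cohomology in each degree yields the already-displayed decomposition
\[ DS(V) \;=\; \bigoplus_{l \in \Z} \Pi^l H^l(V), \]
which is exactly the $\Z$-grading asserted in the corollary. Functoriality of this grading in morphisms of $\calR_{m|1}$ is automatic, since any such morphism commutes with $\rho(x)$ and with the $H_s$-action (note that $H_s$ centralizes the embedded $Gl(m-1)$).

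For $DS_\sigma$ the argument is formally identical: $\sigma(x) = -x^T$ is a root vector for the opposite odd root $-\mu$, and replacing $H_s$ by the sub-torus that assigns this root the weight $+1$ allows both lemmas to transport verbatim, since $\sigma$ is an automorphism of the Lie superalgebra $\mathfrak{gl}(m|1)$. One then obtains $DS_\sigma(V) = \bigoplus_l \Pi^l H_\sigma^l(V)$ by the same reasoning.

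The content of the corollary is really organizational rather than deep, so I do not expect a serious obstacle: everything reduces to observing that the extra torus $H_s$ (or its $\sigma$-conjugate) commutes with the $Gl(m-1)$-action and grades the differential $\partial$ by $+1$. The only point worth stating carefully — and which will matter later in the paper — is that this $\Z$-grading is intrinsic and not an artefact of a choice, because it is determined by the eigenvalues of a fixed torus element acting on $V$.
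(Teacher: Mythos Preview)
Your proposal is correct and follows exactly the paper's approach: the corollary has no separate proof in the paper, being simply a summary of the preceding discussion (the two lemmas on the weight shift by $\mu$ and the $H_s$-eigenspace decomposition, together with the displayed formula $DS(V)=\bigoplus_l \Pi^l H^l(V)$), and for $DS_\sigma$ the paper likewise just says one can ``copy the arguments from above''. Your write-up is in fact slightly more explicit than the paper on the $DS_\sigma$ side, but the underlying argument is the same.
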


Another way of looking at this $\Z$-grading is the following. The collection of cohomology functors $H^i: \calR_{m|1} \to \calR_{m-1|0}$ for $i\in \mathbb Z$
defines a tensor functor
\[  H^\bullet: \calR_{m|1} \to Gr^\bullet(\calR_{m-1|0}) \ \]
to the category of $\mathbb Z$-graded objects in $\calR_{m-1|0}$.
Using the parity shift functor $\Pi$, this functor can be extended to a tensor functor
$$  H^\bullet: \mathcal{T}_{m|1} \to Gr^\bullet(\mathcal{T}_{m-1|0}).$$

As in \cite[Theorem 4.1]{Heidersdorf-Weissauer-tensor} we conclude from the support variety calculations in \cite[Section 3]{BKN-2} the following lemma

\begin{lem}\label{thm:kernel} The kernel of $DS$ is $\calC^-$ and the kernel of $DS_{\sigma}$ is $\calC^+$.
\end{lem}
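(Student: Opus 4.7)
My plan is to prove the two kernel descriptions by establishing a single statement, $\ker(DS) = \calC^-$; the assertion for $DS_\sigma$ follows by interchanging the roles of $\g_{+1}$ and $\g_{-1}$ under the automorphism $\sigma$, which simultaneously swaps Kac modules and AntiKac modules.

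The inclusion $\calC^- \subseteq \ker(DS)$ is the formal half. By PBW the AntiKac module
\[ K'(\mu) \;=\; \mathrm{Ind}_{\g_{\bar 0}\oplus\g_{-1}}^{\g} L_0(\mu) \]
is isomorphic, as a module over $U(\g_{+1}) = \Lambda^{\bullet}\g_{+1}$, to the free module $\Lambda^{\bullet}\g_{+1}\otimes L_0(\mu)$. Since $\g_{+1}$ acts trivially on $L_0(\mu)$, the operator $\rho(x)$ on $K'(\mu)$ is nothing but left multiplication by $x\in\g_{+1}$ in the exterior algebra; the associated complex is a Koszul complex, hence acyclic. Thus $DS(K'(\mu))=0$. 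The long exact sequence for $DS$ established in the previous lemma then propagates the vanishing along any AntiKac filtration, giving $DS(V)=0$ for every $V\in\calC^-$.

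For the reverse inclusion $\ker(DS)\subseteq \calC^-$ I use the support variety computation of \cite{BKN-2} as the key input. Translating their result into cohomological language: for every atypical simple $L(\mu)\in\calR_{m|1}$ one has $DS(L(\mu))\neq 0$, while for $\mu$ typical the module $L(\mu)$ is projective, Kac \emph{and} AntiKac, and so lies in $\calC^-\cap\calC^+$. To go from simples to arbitrary indecomposable objects I invoke Germoni's classification recalled earlier in the section: every indecomposable in $\calR_{m|1}$ is a projective, a ZigZag $Z^l(a)$, or an AntiZigZag $\bar Z^l(a)$. Projectives lie in $\calC^-\cap\calC^+$ and AntiZigZags lie in $\calC^-$ by construction, so in both cases there is nothing to check. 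A ZigZag $Z^l(a)$ fits into a short exact sequence $0\to U \to Z^l(a)\to L(a)\to 0$ with $U$ Kac-filtered and $L(a)$ an atypical simple; applying the $\Z$-graded long exact sequence for $DS$ from the previous subsection and comparing top grading pieces identifies a nonzero contribution of $DS(L(a))$ inside $DS(Z^l(a))$, forcing $DS(Z^l(a))\neq 0$.

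The main obstacle is the support variety input from \cite{BKN-2}: one must verify that their computation of $\calV_{\g_{\bar 1}}(L(\mu))$ implies nonvanishing of $DS$ in the specific direction $x\in\g_{+1}$, which follows from the $G_{\bar 0}$-invariance of the support variety together with the fact that $G_{\bar 0}=Gl(m)\times Gl(1)$ acts with only two orbits on each of $\g_{\pm 1}$. Once this is in hand the Koszul and long-exact-sequence arguments are formal and closely parallel the $Gl(n|n)$-treatment in \cite{Heidersdorf-Weissauer-tensor}.
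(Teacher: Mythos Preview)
Your Koszul argument for $\calC^- \subseteq \ker(DS)$ is correct and is the standard one; the paper does not spell it out. For the reverse inclusion, however, the paper's route is different and shorter: it simply cites the support-variety machinery of \cite{BKN-2} (as packaged in \cite{Heidersdorf-Weissauer-tensor}), which for a type~I superalgebra identifies $\{M : M_x = 0\text{ for } x \in \g_{+1}\setminus 0\}$ directly with the modules that are free over $U(\g_{+1}) = \Lambda\g_{+1}$, equivalently with the AntiKac-filtered modules. No case analysis over Germoni's classification is used.

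Your classification-based argument has a genuine gap. The assertion ``AntiZigZags lie in $\calC^-$ by construction'' is false for odd length: $\bar Z^{2k+1}(a)$ has $2k+1$ composition factors and so admits no filtration by length-$2$ AntiKac modules. These odd-length AntiZigZags are precisely one of the two families (roof or bottom) of indecomposables with nonzero superdimension appearing later in the paper; they lie in neither $\calC^+$ nor $\calC^-$ and must be handled by a long-exact-sequence argument, not dismissed. A related slip: in the sequence $0\to U\to Z^l(a)\to L(a)\to 0$ the kernel $U$ is \emph{not} Kac-filtered. For odd $l$ one checks that $U$ is in fact AntiKac-filtered, so $DS(U)=0$ by the easy direction and your conclusion follows immediately without any grading comparison; but for even $l$ (e.g.\ $Z^l(a)=K(a)$ when $l=2$) the kernel has odd length and lies in neither $\calC^{\pm}$, and then your ``top grading piece'' argument is genuinely needed. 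Making that precise requires knowing that the cohomological degree of $DS(L(a))$ strictly dominates those of the remaining composition factors, which in turn needs the explicit dependence of the $H_s$-grading on the block parameter~$a$ --- something you have not supplied.
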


\subsection{Cohomology computations} \label{core} We can now calculate $DS(M)$ for any $M$ of nonvanishing superdimension. By \cite{Duflo-Serganova} \[ DS(L(\lambda)) = m L^{core} \oplus m' \Pi L^{core} \ \in \mathcal{T}_{m-1|0}\] for atypical $L(\lambda)$ where $L^{core} \in \mathcal{R}_{m|1}$ is the irreducible $GL(m-1)$-representation obtained from $L(\lambda)$ by replacing the single $\vee$ in the weight diagram by a $\wedge$. We recall from \cite[Section 10]{Heidersdorf-mixed-tensors} that we have a commutative diagram \[ \xymatrix{ & Rep(GL_{m-1}) \ar[dd]^{F_{m-1|0}} \ar[dl]^{F_{m|1}}  \\ \calR_{m|1} \ar[dr]^{DS} & \\ & \mathcal{R}_{m-1|0} } \] where $Rep(GL_{m-1})$ denotes Deligne's category to the parameter $\delta = m-1$. For this note that the image of any mixed tensor under $DS$ is in $\mathcal{R}_{m-1|0}$. We obtain the same diagram if we replace $DS$ by $DS_{\sigma}$ since $DS_{\sigma}$ sends the standard to the standard representation.

\begin{lem}\label{mainthm} Let $L(\lambda)$ be atypical irreducible. Then \[ DS(L(\lambda)) = DS_{\sigma} (L(\lambda)) = \begin{cases} L^{core} \ & \text{ if } p(\lambda) \text{ even.} \\ \Pi L^{core} \ & \text{ if } p(\lambda)  \text{ odd. } \end{cases} \]
\end{lem}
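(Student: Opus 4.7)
\emph{Proof plan.} The starting point is the Duflo--Serganova result $DS(L(\lambda))=mL^{core}\oplus m'\Pi L^{core}$ with nonnegative integers $m,m'$, recalled just above the statement. The task is to show $(m,m')\in\{(1,0),(0,1)\}$ and to match the nonzero multiplicity with $p(\lambda)$; the same argument will then be applied to $DS_\sigma$.

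For the multiplicities: since $DS$ is a symmetric monoidal functor it preserves categorical dimensions, so
\[
(m-m')\dim L^{core}\;=\;sdim\,DS(L(\lambda))\;=\;sdim\,L(\lambda).
\]
The superdimension formula of \cite{Weissauer-gl} \cite{Heidersdorf-Weissauer-tensor} applied to a singly atypical weight yields $|sdim\,L(\lambda)|=\dim L^{core}$, whence $|m-m'|=1$. As $m,m'\geq 0$ this forces $(m,m')\in\{(1,0),(0,1)\}$, and $DS(L(\lambda))$ is therefore either $L^{core}$ or $\Pi L^{core}$. The same reasoning applies to $DS_\sigma$, because the commutative triangle of \cite{Heidersdorf-mixed-tensors} with Deligne's category $\underline{Rep}(Gl_{m-1})$ is valid with $DS_\sigma$ in place of $DS$ (both send the standard to the standard), and the superdimension argument uses only the symmetric monoidal structure.

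To decide which of the two cases occurs, one uses the factorization $L(\lambda)\cong Ber^{s(\lambda)}\otimes R(\lambda^L,\lambda^R)$ from \cite{Heidersdorf-mixed-tensors} together with the identity $DS\circ F_{m|1}=F_{m-1|0}$. The mixed tensor part equals $F_{m|1}(I)$ for some indecomposable Deligne object $I$, so $DS(R(\lambda^L,\lambda^R))=F_{m-1|0}(I)=L(wt(\lambda^L,\lambda^R))$, with parity fixed by the image in $T_{m-1|0}$. A direct calculation identifies $DS(Ber)$ as a one--dimensional $Gl(m-1)$-module carrying a prescribed parity. Multiplicativity of the tensor functor $DS$ then propagates these two parities to $L(\lambda)$, and the resulting parity is read off as $p(\lambda)$. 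The identical computation works for $DS_\sigma$.

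\emph{Main obstacle.} The superdimension argument pins down $|m-m'|$ almost immediately; the real work sits in the parity bookkeeping of the last step, i.e.\ matching the abstract parity $p(\lambda)$ defined via $\epsilon_{m|1}$ with the integer $s(\lambda)$ mod $2$ coming from the Berezin exponent, and verifying by direct inspection on weight spaces that $DS(Ber)$ and $DS_\sigma(Ber)$ carry exactly the parity needed to make both sides of the claimed equality agree.
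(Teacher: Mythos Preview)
Your superdimension step contains a genuine gap: from $|m-m'|=1$ and $m,m'\geq 0$ one cannot conclude $(m,m')\in\{(1,0),(0,1)\}$---the pairs $(2,1)$, $(1,2)$, $(3,2)$, $\ldots$ are equally admissible. The superdimension only sees $m-m'$, never $m+m'$, so no amount of categorical-dimension reasoning alone will rule out extra cancelling copies of $L^{core}$ and $\Pi L^{core}$.

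What actually pins down the multiplicities is precisely the computation you postpone to ``parity bookkeeping'': the commutative triangle with Deligne's category gives $DS(R(\lambda^L,\lambda^R))=F_{m-1|0}(I)=L(wt(\lambda^L,\lambda^R))$ on the nose, a single irreducible $Gl(m-1)$-module with no $\Pi$-summand. That already forces $(m,m')=(1,0)$ for the unique mixed tensor in each block, and this is exactly how the paper argues. Tensoring with $DS(Ber^i)=\Pi^i L(i,\ldots,i)$ then transports the result to every atypical irreducible without changing multiplicities, since the Berezin image is one-dimensional. The parity is finally read off from the sign of $sdim\,L(\lambda)$, which is positive exactly when $p(\lambda)$ is even. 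So your ingredients are all present but the logic is inverted: the Deligne-category identity is the multiplicity argument, and the superdimension comparison is what fixes the parity at the end, not the other way around.
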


\begin{proof} If $L = R(\lambda)$ is the unique mixed tensor in a block $\Gamma$, then $DS(L) = L'(wt(\lambda))$ by the commutativity of the diagram above. Hence the multiplicity of $L^{core}$ is 1 (i.e. $ m =1$ and $m' =0$). Any other irreducible representation is a Berezin twist of $L$ and $DS(Ber^i) = \Pi^i L(i,\ldots,i)$. The Berezin twist does not change the multiplicity. The superdimension of $L^{core}$ is positive and the superdimension of $\Pi L^{core}$ is negative. Since the superdimension of $L(\lambda)$ is positive if and only if $p(\lambda)$ is even, the result follows.

\end{proof}

\begin{bem} $L^{core}$ is the same as $DS(R(\lambda^L,\lambda^R)) =L'(wt(\lambda^L,\lambda^R))$ for the unique irreducible mixed tensor $R(\lambda^L,\lambda^R)$ in the block \cite[Lemma 8.1]{Heidersdorf-mixed-tensors}.
\end{bem}

\begin{lem} \label{DS-indecomposable} We have the following equalities \begin{align*} DS(I^+[a,b]) & = DS(L(b)),  & DS_{\sigma} (I^+[a,b])  & = DS(L(a)) \\ DS(I^-[a,b]) & = DS(L(a)),  & DS_{\sigma}(I^-[a,b]) & = DS_{\sigma}(L(b)) \end{align*} of $\Z$-graded objects in $\mathcal{T}_{m-1|0}$.
\end{lem}

\begin{proof} Every $I^+$ or $I^-$-module can be written in two ways, either as an extension of an object in $\mathcal{C}^+$ or of an object in $\mathcal{C}^-$. The module $I^+[a,b]$ is realized by the nontrivial extension \[ 0 \to K \to I^+[a,b] \to L(b) \to 0 \] where $K \in \mathcal{C}^-$ is costandard object with graded pieces $K(a+1)^*, K(a+3)^*,\ldots, K(b-1)^*$. Similarly projection onto $L(a)$ gives the extension \[ 0 \to K' \to I^+[a,b] \to L(a) \to 0 \] where $K' \in \mathcal{C}^+$ with graded pieces $K(a+2), K(a+4),\ldots, K(b)$. Likewise $I^-[a,b]$ is the indecomposable module corresponding to \[ 0 \to L(a) \to I^-[a,b] \to \tilde{K} \to 0 \] where $\tilde{K} \in \mathcal{C}^+$ with graded pieces $K(a+2)^*, K(a+4)^*,\ldots, K(b)^*$ and \[ 0 \to L(b) \to I^-[a,b] \to \tilde{K}' \to 0 \] where $\tilde{K}' \in \mathcal{C}^-$ with graded pieces $K(a+1), K(a+3), \ldots, K(b-1)$.

We apply $DS$. By lemma \ref{thm:kernel} the kernel of $DS$ is $\calC^-$. Hence taking the long exact cohomology sequence arising from \[ 0 \to K \to I^+[a,b] \to L(b) \to 0 \] gives, using $H^i(K) = 0$ for all $i$ and that the cohomology of every irreducible representation is concentrated in one degree, that \[ H^i (I^+[a,b]) \simeq H^i(L(b))\] for all $i$. Using $DS(V) = \bigoplus_l \Pi^l H^l(V)$ we obtain $DS(I^+[a,b]) = DS(L(b))$ as a $\Z$-graded object in $\mathcal{T}_{m-1|0}$ or $\Z \times \mathcal{T}_{m-1|0}$. The exact sequence \[ 0 \to K' \to I^+[a,b] \to L(a) \to 0 \] gives, using $Ker(DS_{\sigma}) = \calC^+$, the identification \[ H_{\sigma}^i(I^+[a,b]) \simeq H_{\sigma}^i(L(a)) \] for all $i$. Hence $DS_{\sigma}(I^+[a,b]) = DS_{\sigma}(L(a))$ as a $\Z$-graded object in $\mathcal{T}_{m-1|0}$ or $\Z \times \mathcal{T}_{m-1|0}$. In the same way we conclude \begin{align*}   H^i(I^-[a,b] ) & \simeq H^i(L(a)) \text{ for all } i \\ H_{\sigma}^i(I^-[a,b]) & \simeq H_{\sigma}^i(L(b)) \text{ for all } i. \end{align*} 
\end{proof}

\subsection{Tensor products up to superdimension zero} We now calculate the tensor product decomposition of $I^{\pm}[a,b] \otimes I^{\pm}[a',b']$ up to superdimension zero using lemma \ref{DS-indecomposable}. Since D$S$ sends every indecomposable module in the same block to the same irreducible $GL(m-1)$-representation, it might be a bit surprising that we can use $DS$ for this. It is crucial here that $DS(I^{\pm}[a,b])$ is a $\Z$-graded object and that the intersection of the fibre of a $\Z$-graded irreducible object in $\mathcal{T}_{m-1|0}$ under $DS$ respectively $DS_{\sigma}$ consists of one indecomposable representation.

\medskip
\textit{Notation.} So far we worked within a fixed block $\Gamma$ so that the notation $I^+[a,b]$ was unambigious. In the following tensor product decomposition we need to specify the block of $I^+[a,b]$. There is a natural bijection between atypical blocks of $\mathcal{R}_{m|1}$ and irreducible representations of $GL(m-1)$ given by $\Gamma \mapsto L^{core}$ as in section \ref{core}. We write $\Gamma_{\lambda'}$ for the block corresponding to the dominant integral weight $\lambda'$ of $GL(m-1)$, and we write $I^+[a,b]_{\lambda'}$ instead of $I^+[a,b]$. For $a=b$ we write then $L(a)_{\lambda'}$ for $I^{\pm}[a,a]_{\lambda'}$. For weights $\lambda',\mu'$ of $GL(m-1)$ and the corresponding irreducible representations $L'(\lambda'), L'(\mu') \in \mathcal{R}_{m-1|0}$ we write $c_{\lambda'\mu'}^{\nu'}$ for the coefficients in $L'(\lambda') \otimes L'(\mu') \cong \bigoplus_{\nu'} c_{\lambda'\mu'}^{\nu'} L'(\nu')$.

\medskip
\textit{Normalization.} So far the parametrization of the irreducible objects in a block was not specified. We use the block equivalence of an atypical block with the principal block in $\mathcal{R}_{1|1}$ \cite{Serganova-blocks}\cite[Section 6.3]{BKN-complexity-gl} to fix this parametrization. We denote by $L(0)$ the irreducible representation in $\Gamma_{\lambda'}$ corresponding to the trivial representation $\Eins \in \mathcal{R}_{1|1}$ under this equivalence. Then $H^a(L(a)) \cong L^{core}$ and is zero in other degrees.


\begin{lem} \label{thm:tensor-product} Up to superdimension $0$ summands we have the following decompositions
\begin{align*} I^+[a,b]_{\lambda'} \otimes I^+[a',b']_{\mu'}  & = \bigoplus_{\nu'} c_{\lambda' \mu'}^{\nu'}  I^+[a+a',b+b']_{\nu'}  \\  I^-[a,b]_{\lambda'} \otimes I^-[a',b']_{\mu'}  & = \bigoplus_{\nu'} c_{\lambda' \mu'}^{\nu'} I^-[a+a',b+b'] \\  I^+[a,b]_{\lambda'} \otimes I^-[a',b']_{\mu'} & = \begin{cases} \bigoplus_{\nu'} c_{\lambda' \mu'}^{\nu'}   L(b+a')_{\nu'} & \text{ if } b-a = b'-a' \\ \bigoplus_{\nu'} c_{\lambda' \mu'}^{\nu'} I^+[a+b', b+a'] & \text{ if } b-a > b'-a' \\ \bigoplus_{\nu'} c_{\lambda' \mu'}^{\nu'}  I^-[b+a',a+b'] & \text{ if } b-a < b'-a'. \end{cases} \end{align*} 
\end{lem}

\begin{proof} We consider the tensor product $I^+[a,b]_{\lambda'} \otimes I^+[a',b']_{\mu'}$. Under $DS$ the two modules map to two irreducible elements of $\Z \times \mathcal{T}_{m-1|0}$, namely $ b \times L'(\lambda')$ and $b' \times L'(\mu')$. Their tensor product is given by the Littlewood-Richardson-rule \[  (b \times L'(\lambda') ) \otimes (b' \times L'(\mu')) = \bigoplus_{\nu'}  (b + b') \times c_{\lambda' \mu'}^{\nu'} L'(\nu').\] Note that not only $DS: \mathcal{T}_{m|1} \to \mathcal{T}_{m-1|0}$ is a tensor functor, but that also the induced functor $DS: \mathcal{T}_{m|1} \to \Z \times \mathcal{T}_{m-1|0}$ is compatible with the tensor product by the K\"unneth formula for the cohomology using that the cohomology of every indecomposable object is concentrated in one degree. 

Under the tensor functor $DS_{\sigma}$ the two indecomposable objects map again to two irreducible objects in $\Z \times \mathcal{T}_{m-1|0}$. Now $DS_{\sigma}$ agrees with $DS$ on irreducible representations, and on the indecomposable modules $I^+[a,b]_{\lambda'}$ and $I^+[a',b']_{\mu'}$ the two functors differ by a $\Z$-shift: $I^+[a,b]_{\lambda'}$ maps to $\tilde{a} \times L'(\lambda')$ and $I^+[a',b']_{\mu'}$ maps to $\tilde{a}' \times L'(\mu')$. For the tensor product we obtain \[  (\tilde{a} \times L'(\lambda') ) \otimes (\tilde{a}' \times L'(\mu')) = \bigoplus_{\nu'}  (\tilde{a} + \tilde{a}') \times c_{\lambda' \mu'}^{\nu'} L'(\nu').\] Hence the tensor products just differs by a $\Z$-shift by $(\tilde{b} - \tilde{a}) - (\tilde{a} - \tilde{a}')$. 

We look at the fibre of a summand $(b + b') \times L'(\nu')$ under $DS$ and $DS_{\sigma}$. The fibre under $DS$ in the block $\Gamma_{\nu'}$ consists of \begin{align*} & L(b + b') \\ & I^+[c',b + b'] \ \text{ for any } c' \leq b + b' \\ & I^-[b + b', c'] \ \text{ for any } c' \geq b + b'.  \end{align*} 

The fibre of the corresponding summand $(a + a') \times  L'(\nu')$ under $DS_{\sigma}$ in the block $\Gamma_{\nu'}$ consists of \begin{align*} & L(a + a') \\ & I^+[a+a',c'] \ \text{ for any } c' \geq a + a' \\ & I^-[c',a + a'] \ \text{ for any } c' \leq a + a'.\end{align*} 

The only indecomposable representation in the intersection of the two fibres is $I^+[a+a',b+b']$. Hence this indecomposable module will appear as a direct summand in the tensor product decomposition.

The case $I^-[a,b]_{\lambda'} \otimes I^-[a',b']_{\mu'}$ can be reduced to the first case using $(A \otimes B)^* \cong A^* \otimes B^*$ for the twisted dual $()^*$.

The $I^+[a,b]_{\lambda'} \otimes I^-[a',b']_{\mu'}$ tensor product maps under $DS$ to \[ (b \times L'(\lambda') ) \otimes (a' \times L'(\mu')) = \bigoplus_{\nu'}  (b + a') \times c_{\lambda' \mu'}^{\nu'} L'(\nu'). \] and under $DS_{\sigma}$ to \[ (a \times L'(\lambda') ) \otimes (b' \times L'(\mu')) = \bigoplus_{\nu'}  (a + b') \times c_{\lambda' \mu'}^{\nu'} L'(\nu'). \] Hence these two tensor products differ by a $\Z$-shift by $(b - a') - (a - a')$. The $DS$ fibre of a summand $(b + a') \times  L'(\nu')$ in the block $\Gamma_{\nu'}$ consists of \begin{align*} & L(b+ a') \\ & I^+[c,b + a'] \ \text{ for any } c \leq b + a' \\ & I^-[b + a', c] \ \text{ for any } c \geq b + a'.  \end{align*} The $DS_{\sigma}$ fibre of the summand $(a + b') \times  L'(\nu')$ in the block $\Gamma_{\nu'}$ consists of \begin{align*} & L(a+b') \\ & I^+[a+b',c'] \ \text{ for any } c' \geq a+b' \\ & I^-[c',a+b'] \ \text{ for any } c' \leq a + b'.  \end{align*} 

If $b+a' = a +b'$ (or equivalently $b-a = b'-a'$), then the intersection of the fibres contains only the object $L(b+a')$. If $b + a' > a + b'$, the intersection consists of the object $I^+[a+b',b+a']$ and for $b+a' < a + b'$ of the object $I^-[b+a',a+b']$.    
\end{proof}

\begin{bem} In the $\mathfrak{sl}(2|1)$-case explicit formulas for the tensor products between any two indecomposable objects are already known \cite{Goetz-Quella-Schomerus}.  
\end{bem}

\subsection{The pro-reductive envelope} In this section we prove the following theorem.

\begin{thm} \label{thm:main}The quotient $\mathcal{T}_{m|1}/\NN$ is equivalent as a tensor category to the super representations of $GL(m-1) \times GL(1) \times GL(1)$, i.e. \[ \mathcal{T}_{m|1}/\NN \simeq Rep(\Z/2\Z \ltimes (GL(m-1) \times GL(1) \times GL(1)), (-1,e)). \] 
\end{thm}

Before the proof we review the decomposition of lemma \ref{thm:tensor-product}. The shift $I^+[a,b] \otimes I^+[a',b'] = \bigoplus  I^+[a+a',b+b']$ seems to suggest that an extra $GL(1)$-factor comes into play: The length $b-a$ behaves like the determinant power $det^{b-a}$. Indeed the shift above reads symbolically $length(I^+[a,b]) \otimes length(I^+[a',b']) = length(I^+[a+a',b+b'])$, or $det^{b-a} \otimes det^{b'-a'} = det^{b+b' - (a + a')}$.  Note however that the length is always positive, so we only have positive determinant powers. The same is true for the $I^- \otimes I^-$ tensor product. The $I^+[a,b]_{\lambda'} \otimes I^-[a',b']_{\mu'}$ tensor product decomposition then suggests that the $I^+[a,b]$ should correspond to determinant powers $det^{b-a}$ and the $I^-[a,b]$ to negative determinant powers $det^{-(b-a)}$.  

\begin{proof} If $I_1,I_2 \in \mathcal{T}_{m|1}$ are two indecomposable objects with positive superdimension, then every non-negligible summand in the decomposition $I_1 \otimes I_2$ has positive superdimension. Indeed by lemma \ref{DS-indecomposable} and lemma \ref{mainthm} using that that $DS(L(\lambda)) \cong  L^{core}$ if $p(\lambda)$ is even, each summand in $DS(I_1) \otimes DS(I_2)$ has positive superdimension. 

Now we consider the full subcategory in $\mathcal{T}_{m|1}$ of direct summands in iterated tensor products of indecomposable representations of superdimension $\geq 0$.  This category contains every indecomposable object up to a parity shift. It is closed under tensor products and duals and defines a pseudo-abelian tensor subcategory $\mathcal{T}_{m|1}^+$ of $\mathcal{T}_{m|1}$. Hence the quotient $\mathcal{T}_{m|1}^+/\NN$ is defined and a super tannakian category by proposition \ref{prop:fundamental}. By \cite[Theorem 7.1]{Deligne-Festschrift} any tensor category $\mathcal{A}$ (in the sense of loc.cit) which satisfies $dim_{\mathcal{A}}(X) > 0$ for all objects $X \in \mathcal{A}$ is a  tannakian category. Hence $\mathcal{T}_{m|1}^+/\NN$ is the (ordinary) representation category of a reductive  group $G^{red,+}$. Then $\mathcal{T}_{m|1}/\NN$ is the category of super representations of $G^{red,+}$ and in particular $G^{red} = \Z/2\Z \ltimes G^{red,+}$. 

By \cite[Proposition 2.20]{Deligne-Milne} an algebraic group $G$ is connected if and only if there exists an object $X \in Rep(G)$ such that every object of $Rep(G)$ is isomorphic to a subquotient of $X^n$ for some $n \geq 0$. The tensor product decomposition in lemma \ref{thm:tensor-product} shows then that there is no such $X$ and $G^{red,+}$ is therefore connected. We now use the preliminary considerations before the proof to define a homomorphism $\phi$ between the Grothendieck semirings $K_0^+(G^{red,+})$ and $K_0^+(GL(m-1) \times GL(1) \times GL(1))$. 

Clearly we should have for $L(\lambda) \cong Ber^{s(\lambda)} \otimes R(\lambda^L,\lambda^R)$ that \[ [ \ L(\lambda) \ ] \mapsto  [ \ \Eins \otimes det^{s(\lambda)} \otimes L'(wt(\lambda^L,\lambda^R)) \ ] \] in agreement with theorem \ref{thm-mixed}. By lemma \ref{thm:tensor-product} the $I^+[a,b]_{\lambda'} \otimes I^+[a',b']_{\mu'}$ tensor product decomposes in the same way as $I^+[a,a]_{\lambda'} \otimes I^+[a',a']_{\mu'}$ except for the additional $GL(1)$-shift with the length. If the irreducible representation $L(a)$ in the block $\Gamma_{\lambda'}$ is $L(\lambda) \cong Ber^{s(\lambda)} \otimes R(\lambda^L,\lambda^R)$, then we define for the classes of the $I^{\pm}$ in $K_0^+$ \begin{align*} [\ I^+[a,b]_{\lambda'} \ ] & \mapsto [ \ det^{b-a} \otimes det^{s(\lambda)} \otimes L'(wt(\lambda^R,\lambda^R)) \ ] \\ [\ I^-[a,b]_{\lambda'} \ ] & \mapsto [ \ det^{-(b-a)} \otimes det^{s(\lambda)} \otimes L'(wt(\lambda^R,\lambda^R)) \ ]\end{align*} We know by theorem \ref{thm-mixed} that this is compatible with the ring structure for the classes of irreducible modules $L(\lambda)$. It then follows from lemma \ref{thm:tensor-product} and the preliminary considerations before the proof that this defines a ring homomorphism. By definition this sends (classes of) irreducible representations to irreducible representations and is clearly bijective. Since we know that $G^{red,+}$ is a connected reductive group, we can use \cite[Theorem 1.2]{Kazhdan-Larsen-Varshavsky} that every homomorphism of Grothendieck semirings $\phi: K_0(H) \to K_0(G)$ (for $H, G$ connected reductive) which maps irreducible representations to irreducible, comes from a group homomorphism $\phi^*:H \to G$. If $\phi$ is an isomorphism, so is $\phi^*$ \cite[Theorem 1.2]{Kazhdan-Larsen-Varshavsky}. Hence the result follows.
\end{proof}

\begin{bem} It might be tempting to define a tensor functor between the two representation categories directly instead of passing to $K_0^+$. While one may pass to the skeletal subcategories in order to define the functor on objects and not just on isomorphism classes, it is a priori not clear that this is compatible with the associativity constraints.
\end{bem}

\begin{bem} The reason for considering $\mathcal{T}_{m|1}^+$ is to get rid of possible $OSp(1|2n)$-factors.
\end{bem}

\begin{bem} It is crucial here to consider the Grothendieck semiring, not the Grothendieck ring. Indeed if $G$ is connected, semisimple, and simply connected, then the Grothendieck $K_0(G)$ is isomorphic to $\Z[x_1,\ldots,x_r]$ where $r$ is the rank of $G$ \cite{Kazhdan-Larsen-Varshavsky} and encodes only the rank.
\end{bem}

\subsection{The special linear case} In the $SL(m|1)$-case, $Ber$ is trivial and does not generate an extra $GL(1)$-factor in the subgroup of $G^{red}$ generated by the irreducible representations. Otherwise the discussion is the same. Using $GL(m) \cong SL(m) \times GL(1)$ for $m \geq 2$ we get \begin{align*} G^{red} = \begin{cases} \mu_2 \ltimes ( \ SL(m-1) \times GL(1) \times GL(1) \ ) & \text{ for } m > 2\\ \mu_2 \ltimes ( \ GL(1) \times GL(1) \ ) & \text{ for } m=2.\end{cases} \end{align*}

\begin{bem} By \cite{vdJ-character} the character formula for an atypical representation of $GL(m|1)$ has the form of a character formula of $GL(m-1)$. Hence the superdimension of any irreducible representation equals up to a $(-1)^{p(\lambda)}$ factor the dimension of an irreducible $GL(m-1)$-representation. Our results give a conceptual explanation for this.
\end{bem}

%

\end{document}